\documentclass[11pt, twosides]{amsart}
\usepackage{amsfonts}
\usepackage{indentfirst,amsthm,bm,amsmath}
\usepackage{hyperref}
\usepackage[pagewise]{lineno}
%\linenumbers
%\usepackage[notcite,notref]{showkeys}
\makeatletter
\newtheorem{theorem}{Theorem}[section]

\newtheorem{lemma}{Lemma}[section]

\newtheorem{definition}{Definition}[section]
\newtheorem{remark}{Remark}[section]

\allowdisplaybreaks[4]

\begin{document}
\title[Gauss curvature estimate and Modified defect relation]
{An estimation of the Gauss curvature  and the modified defect relation for the Gauss map of
immersed harmonic surfaces in $\mathbb{R}^n$}

\author{Zhixue Liu}
\address{School of Science, Beijing University of Posts and Telecommunications, Beijing 100876, P. R. China;
Key Laboratory of Mathematics and Information Networks (Beijing University of Posts and Telecommunications), Ministry of Education, China.}
\email{zxliumath@bupt.edu.cn(the corresponding author)}
\author{Yezhou Li}
\address{School of Science, Beijing University of Posts and Telecommunications, Beijing 100876, P. R. China;
	Key Laboratory of Mathematics and Information Networks (Beijing University of Posts and Telecommunications), Ministry of Education, China.}
\email{yezhouli@bupt.edu.cn}
\thanks{This work was supported by the National Natural Science
Foundation of China (Grant No.12101068, No.12171050, No.12261106).}

\date{}

\keywords{Harmonic surfaces; $K$-quasiconformal; Gauss map; Curvature estimate; Defect relation}

\subjclass[2020]{Primary 32H25, 53A10; secondary 53C42, 30C65}
\begin{abstract}
In this paper,
we  study the estimation of Gauss curvature  for $K$-quasiconformal harmonic surface in ${\mathbb R}^3$
and present  an accurate improvement of the previous result in [6, Theorem 5.2].
Let $X:M\rightarrow{\mathbb R}^3$ denote a $K$-quasiconformal harmonic surface and
let $\mathfrak{n}$ be the unit normal map of $M$.
We define $d(p)$ as the distance from point $p$ to the boundary of $M$ and $\mathcal{K}(p)$ as the Gauss curvature of $M$ at $p$.
Assuming that the Gauss map (i.e., the normal $\mathfrak{n}$) omits $7$ directions $\mathbf{d}_1,\cdots,\mathbf{d}_7$ in $S^2$ with the property that
any three of these directions
are not contained in a plane in ${\mathbb R}^3$.
Then there exists a positive constant $C$ depending only on $\mathbf{d}_1,\cdots,\mathbf{d}_7$ such that
\begin{equation*}
|\mathcal{K}(p)|\leq C/d(p)^2
\end{equation*}
for all points $p\in M$.
Furthermore,
a  modified defect relation for the generalized Gauss map
of the immersed harmonic surfaces in $\mathbb{R}^n(n\geq 3)$ is verified.
\end{abstract}

\maketitle
\section{Introduction and main results}
The classical Bernstein theorem states that a minimal graph over the whole plane, defined as a surface that attains the smallest area for a given boundary, is planar. Consequently, its Gauss map(its unit normal) will omit at least half of the sphere.
R. Osserman \cite{OR-1961} extended this theorem to encompass surfaces that are not necessarily the graph of a function.
He demonstrated that the Gauss map of a complete minimal surface immersed in three-dimensional Euclidean space, denoted as
$\mathbb{R}^3$, cannot omit a set of positive logarithmic capacity unless the surface is flat.
In 1981, F. Xavier \cite{XA-1981} further advanced this research by proving that if a nonflat complete minimal surface
$M$ exists, the Gauss map of
$M$ can omit at most six points on the sphere.
In 1988, H. Fujimoto \cite{Fujimoto-1988} gave the following curvature estimate
for minimal surfaces immersed in ${\mathbb R}^3$.
\begin{theorem}\cite{Fujimoto-1988}\label{Fujimoto-1988}
Let $X:M\rightarrow{\mathbb R}^3$ be a nonflat noncomplete minimal surface and let
$g:M\rightarrow{\mathbb P}^1({\Bbb C})$ be the Gauss map.
We define $d(p)$ as the distance from $p$ to the boundary of $M$ and $\mathcal{K}(p)$ as the Gauss curvature of $M$ at $p$.
If $g$ omits at least five points $\alpha_1,\cdots, \alpha_5$,
then there exists a positive constant $C$ that depends only on
$\alpha_1,\cdots, \alpha_5$ such that
$$|\mathcal{K}(p)|\leq C/d(p)^2$$
for any point $p\in M$.
\end{theorem}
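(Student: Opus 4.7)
The plan is to argue by contradiction via a Brody--Zalcman-type rescaling, reducing the claim to the classical theorem (Fujimoto) that the Gauss map of a complete nonflat minimal surface in $\mathbb{R}^3$ can omit at most four values.

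Suppose the bound fails, so there exists a sequence $p_n\in M$ with $|\mathcal{K}(p_n)|^{1/2}d(p_n)\to\infty$. Using the Weierstrass representation, I would choose isothermal parametrizations $\phi_n:D_{d(p_n)}\to M$ with $\phi_n(0)=p_n$; then $g_n:=g\circ\phi_n$ is meromorphic on $D_{d(p_n)}$ and omits $\alpha_1,\ldots,\alpha_5$, the induced metric takes the form $\lambda_n^2|dz|^2$, and the minimal-surface identity $\sqrt{|\mathcal{K}_n(z)|}\,\lambda_n(z)=2|g_n'(z)|/(1+|g_n(z)|^2)$ holds. Thus $\sqrt{|\mathcal{K}_n|}\,\lambda_n$ is precisely the spherical derivative of $g_n$, a conformally invariant density.

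Next, I would perform a Brody reparametrization: choose scales $\rho_n\to 0$ and centers $z_n\in D_{d(p_n)/2}$ such that the rescaled maps $\tilde\phi_n(w):=\phi_n(z_n+\rho_nw)$ are defined on disks $D_{R_n}$ with $R_n\to\infty$, have Gauss curvature uniformly bounded on each compact subset of $\mathbb{C}$, and satisfy $|\tilde{\mathcal{K}}_n(0)|\geq 1$. Since each $\tilde g_n:=g\circ\tilde\phi_n$ omits at least three distinct values, $\{\tilde g_n\}$ is a normal family by Montel's theorem; combined with independent control of the Weierstrass $1$-form at the origin, a subsequence of the rescaled immersions converges on compacta of $\mathbb{C}$ to a minimal immersion $\tilde X:\mathbb{C}\to\mathbb{R}^3$ that is complete (from $R_n\to\infty$ and the uniform nondegeneracy of the conformal factor), nonflat (from the curvature lower bound at the origin), and whose Gauss map still omits the five values $\alpha_1,\ldots,\alpha_5$. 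This contradicts the known complete-surface result.

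The main obstacle is arranging the rescaling so that the limit is genuinely complete in its intrinsic metric and so that the Weierstrass $1$-form does not degenerate. This is addressed by the standard Brody lemma applied to $\sqrt{|\mathcal{K}_n|}\,\lambda_n$, together with an auxiliary normalization pinning down the Weierstrass data at the origin. A parallel and self-contained route, closer to Fujimoto's own argument, avoids invoking the complete-surface theorem and instead constructs directly on each conformal disk $D_{d(p)}$ an auxiliary pseudo-metric $d\sigma=v\,|dz|$ built from the spherical derivative of $g$ and a product $\prod_{j=1}^{5}|g-\alpha_j|^{-1+\eta}$ for small $\eta>0$. A direct computation shows that $d\sigma$ has Gauss curvature $\leq -1$ (this uses precisely the slack $5-4>0$, which is where the hypothesis of five omitted values enters), so the Ahlfors--Schwarz lemma yields $v(0)\leq C/d(p)$; unraveling the definition of $v$ then produces the desired bound on $|\mathcal{K}(p)|$.
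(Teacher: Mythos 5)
The paper does not reprove this theorem --- it quotes Fujimoto --- but the machinery it actually uses is visible in Lemma~\ref{lem-conformal}: for a minimal surface the induced metric is $\lambda^2|dz|^2$ with $\lambda=\tfrac12|h|(1+|g|^2)$, i.e.\ exactly of the form $\|\tilde G\|^{2m}|\omega|^2$ with $n=1$, $m=2$, $\tilde G=(1,g)$, $\omega=\tfrac12 h\,dz$, and the threshold $\tfrac{n+1}{2}(mn+2)=4$ reproduces ``five omitted points.'' That lemma is proved by the negatively curved auxiliary pseudo-metric plus the Ahlfors--Schwarz lemma (Lemma~\ref{Schwarz}), which is precisely your second, ``self-contained'' route and is Fujimoto's own argument. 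So your backup plan is the right one and matches the paper's methodology; your primary plan does not work as stated.

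The gap in the Brody--Zalcman route is that the quantity you rescale, $\sqrt{|\mathcal{K}_n|}\,\lambda_n=2|g_n'|/(1+|g_n|^2)$, sees only the Gauss map $g$ and not the Weierstrass $1$-form $h$, whereas both the hypothesis $|\mathcal{K}(p_n)|^{1/2}d(p_n)\to\infty$ and the conclusion ``the limit is a \emph{complete} nonflat minimal surface'' live in the induced metric $\lambda=\tfrac12|h|(1+|g|^2)$, which does depend on $h$. Normality of $\{\tilde g_n\}$ via Montel gives you a limiting meromorphic map, but it gives no locally uniform two-sided bounds on $\tilde h_n=\rho_n h_n(z_n+\rho_n\cdot)$; a single normalization $\tilde h_n(0)=1$ does not prevent $\tilde h_n$ from degenerating elsewhere on compacta, so the immersions need not subconverge. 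Even if they do, completeness of the limit is not automatic: a conformal metric $\mu^2|dw|^2$ on $\mathbb{C}$ with $\log\mu$ subharmonic can still be incomplete (e.g.\ $\mu=e^{-\operatorname{Re}w}$), and $R_n\to\infty$ alone does not rule this out. There is also an unaddressed step at the very start: producing an isothermal chart $\phi_n:D_{d(p_n)}\to M$ --- the intrinsic ball of radius $d(p_n)$ need not be simply connected, and relating its conformal radius to $d(p_n)$ again requires control of $\lambda_n$, hence of $h_n$. These are exactly the difficulties that Fujimoto's pseudo-metric circumvents by coupling $|h|$, $|g'|$ and $\prod_j|g-\alpha_j|^{-1+\eta}$ into a single density $v$ with curvature $\le -1$ \emph{on the given disk}, so that Ahlfors--Schwarz delivers the estimate pointwise with no limiting process. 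Your sketch of that route is essentially correct, including the role of the slack $5-4>0$, but be aware that the density $v$ must contain a power of $|h|$ (equivalently of $\lambda$); a metric built only from the spherical derivative of $g$ and the product $\prod_j|g-\alpha_j|^{-1+\eta}$ cannot, after unraveling, produce a bound on $|\mathcal{K}(p)|\,d(p)^2$, since $d(p)$ is measured in the induced metric.
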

Building upon these findings, H. Fujimoto \cite{Fujimoto-1988} obtained a precise result by proving that the maximum number of exceptional values omitted by the Gauss map of such a surface is four.
Notably, there exists a complete minimal surface for which the Gauss map precisely omits four directions (see \cite{OR-1961}).
The results mentioned above reveal that the Gauss map of a complete minimal surface immersed in
${\mathbb R}^3$ shares many similar value distribution properties  with meromorphic functions defined on ${\mathbb C}$.\par

\medskip
In the late 1960s,
T. K. Milnor \cite{KT-1967,KT-1968}
initiated the investigation of
whether the theory of minimally immersed surfaces
extends in an interesting manner to the
broader class of harmonically immersed surfaces.
Subsequently,
many results have been established in this area
(see \cite{AL-2013,CKW-2015,CKW-2018,DS-2019,JR-1988,JR-1989,KD-2013,KT-1979,KT-1980,KT-1983}).
As we know, minimal surfaces immersed in $\mathbb{R}^n$  are enormously special, they are some conformal harmonic surfaces.
However,
not all  properties of minimal surfaces  have reasonable counterparts in the case of harmonic surfaces.
For instance, the classical Bernstein theorem  failed
to hold if we only assume the graph is harmonic.
The geometric properties of a surface are largely influenced by the choice of metrics on the surface.
Unlike the case of  minimal surfaces,
the metric $ds^2$(see (\ref{pro1.1.}))  on the immersed harmonic surfaces $M$
induced from the standard inner product on $\Bbb{R}^n$ does not need to be conformal metric.
Instead, it consists of two parts: {\it the conformal metric $\Gamma$}
and {\it the Hopf differential} (see the next section for the details).
\par

\medskip
In a recent study,
the author and collaborators \cite{Chen-2021}
extended the value distribution theory on minimal surfaces to the larger class of
$K$-quasiconformal harmonic surfaces(the definition of $K$-quasiconformal harmonic surfaces will be given in the next section).
They proved that the unit normal map $\mathfrak{n}$ of a nonflat complete $K$-quasiconformal harmonic surfaces
omits at most 6 directions in $S^2$
that satisfy any three of which  are not contained in a plane in ${\Bbb R}^3$.
Furthermore, they conducted further research on estimating the curvature of K-quasiconformal harmonic surfaces when the normal $\mathfrak{n}$ omits a small neighborhood around a fixed direction.

\begin{theorem}\cite{Chen-2021}\label{Chen-2021}
Let $X:M\rightarrow{\mathbb R}^3$ be a $K$-quasiconformal harmonic surface in $\mathbb{R}^3$.
Suppose that the normals to the points of $M$ all make an angle of at least $\theta>0$ with some fixed direction and satisfying that $|(\phi'\cdot\phi)(\overline{\phi'}\cdot\phi)|/\|\phi\|^4\leq N_K$, where $\phi=\partial X/\partial z$ and $N_K$ is a constant.
Then we have the following inequality:
\begin{equation}\label{thm1.1}
|\mathcal{K}(p)|\leq C/d(p)^2
\end{equation}
where $C$ is a constant that depends  on $K,\theta, N_K$.
\end{theorem}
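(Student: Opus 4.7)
The plan is to adapt Fujimoto's Ahlfors--Schwarz strategy from Theorem~\ref{Fujimoto-1988} to the non-conformal setting of a $K$-quasiconformal harmonic surface, using the quasiconformality together with the $N_K$-bound to control the extra harmonic terms that are absent in the minimal case.

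First, I would set up the local picture. Around any interior point $p\in M$, choose an isothermal parameter $z$ for the conformal metric $\Gamma$, so that $\Gamma = 2\|\phi\|^{2}\,|dz|^{2}$ where $\phi=\partial X/\partial z$ is $\mathbb{C}^{3}$-valued and holomorphic (since $X$ is harmonic). The induced metric $ds^{2}$ is $\Gamma$ plus the real part of the Hopf differential $(\phi\cdot\phi)\,dz^{2}$, and $K$-quasiconformality translates into $|\phi\cdot\phi|\leq k\,\|\phi\|^{2}$ for some $k=k(K)<1$. The unit normal $\mathfrak{n}$ is the normalized real cross product of $\phi$ and $\bar\phi$; stereographic projection from the fixed direction turns it into a meromorphic-type Gauss map $g$, and the hypothesis that $\mathfrak{n}$ makes angle at least $\theta$ with that direction forces $g$ to take values in a fixed compact subset of $\mathbb{C}$ (or of $\mathbb{C}\setminus\{0\}$).

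Second, I would derive an explicit formula for the Gauss curvature $\mathcal{K}$ as a rational expression in $\|\phi\|^{2}$, $|\phi\cdot\phi|$, $|\phi'\cdot\phi|$ and $\|\phi'\|^{2}$, where $\phi'=\partial\phi/\partial z$. Feeding in the quasiconformality bound together with $|(\phi'\cdot\phi)(\overline{\phi'}\cdot\phi)|/\|\phi\|^{4}\leq N_K$, the goal is to reduce matters to controlling $\|\phi'\|^{2}/\|\phi\|^{2}$, which in the minimal case is essentially $|g'|^{2}$ computed through the Gauss map. The target output of this step is a Fujimoto-type comparison $|\mathcal{K}|\,ds^{2}\leq C(K,N_K)\,|g'|^{2}/(1+|g|^{2})^{2}\,|dz|^{2}$, modulo harmless error terms. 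Then, following the Ahlfors--Schwarz blueprint, introduce the auxiliary pseudo-metric $d\tau^{2}=|\mathcal{K}|^{\alpha}\,ds^{2}$ with a small exponent $0<\alpha<1$, and verify by a direct calculation that its Gaussian curvature is bounded above by a strictly negative constant depending only on $K,\theta,N_K,\alpha$, where the compactness of the image of $g$ is used to bound the Poincar\'e-type metric on the target from below. The Ahlfors--Schwarz lemma applied on the Euclidean disk $|z-z_0|<d(p)$ then gives $d\tau\leq C\cdot(\text{Poincar\'e metric of the disk})$ at $p$, which immediately rearranges to $|\mathcal{K}(p)|\leq C/d(p)^{2}$ with $C=C(K,\theta,N_K)$.

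The main obstacle is the algebraic/PDE step isolated in step two: for a minimal surface the Gauss curvature has a clean log-derivative form and the Gauss map is holomorphic, but for a $K$-quasiconformal harmonic surface the Hopf differential survives and the curvature formula picks up cross terms involving $\phi\cdot\phi$ and $\phi'\cdot\phi$. Absorbing these cross terms into a bound comparable to the holomorphic model $|g'|^{2}/(1+|g|^{2})^{2}$, with quantitative dependence only on $K$ and $N_K$, is exactly what the hypothesis $|(\phi'\cdot\phi)(\overline{\phi'}\cdot\phi)|/\|\phi\|^{4}\leq N_K$ is designed to enable, but the bookkeeping is delicate; and one must still choose $\alpha$ small enough that the negative curvature of $d\tau^{2}$ dominates the positive contribution produced by the harmonic perturbation. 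Once these two quantitative choices are fixed, the Ahlfors--Schwarz step is routine.
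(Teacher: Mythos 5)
First, a framing remark: the paper does not actually prove this statement --- it is quoted verbatim from \cite{Chen-2021} --- so the only in-paper argument to measure your proposal against is the proof of Theorem \ref{mainthm-2}, which strengthens it (and shows the $N_K$ hypothesis is superfluous). Your overall philosophy (negatively curved auxiliary metric plus Ahlfors--Schwarz on the disk of radius $d(p)$) is indeed the engine behind all of these estimates, but the proposal has two concrete gaps. (1) For a non-minimal harmonic surface the stereographic projection $g$ of the unit normal $\mathfrak{n}$ is \emph{not} meromorphic, so $|g'|^2/(1+|g|^2)^2$ is not the density of the pullback of the spherical metric under a holomorphic map, and the $\Delta\log$-subharmonicity computations that make an auxiliary metric negatively curved are simply unavailable for it. The holomorphic object here is the generalized Gauss map $G=[\phi_1:\phi_2:\phi_3]:M\to{\mathbb P}^2({\mathbb C})$ (which does not sit on the quadric unless $h=\phi\cdot\phi\equiv 0$), and Lemma \ref{lem1} is the indispensable bridge turning the angle-$\theta$ hypothesis on $\mathfrak{n}$ into a lower bound $|\phi\cdot\mathbf{d}|^2/\|\phi\|^2\geq c(\theta,K)>0$, i.e.\ a positive projective distance of $G$ from one hyperplane; your proposal never makes this translation. (2) The specific pseudo-metric $d\tau^2=|\mathcal{K}|^{\alpha}ds^2$ does not have curvature bounded above by a negative constant even in the minimal case: a direct computation gives curvature comparable to $-(1-\tfrac{9\alpha}{8})|\mathcal{K}|^{1-\alpha}$, which degenerates wherever the surface is nearly flat, so Lemma \ref{Schwarz} does not apply. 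The working constructions instead build the conformal factor explicitly out of the holomorphic data $\phi$, $\phi\cdot\mathbf{d}$ and logarithmic correction terms. A further technical point: $ds^2$ is not conformal in $z$ (the Hopf differential survives), so one cannot compute its curvature by the $-\Delta\log\rho/\rho^2$ formula; the Schwarz-lemma step must be run for the Klotz metric $\Gamma=2\|\phi\|^2|dz|^2$ and then transferred.

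That transfer is exactly the structure your proposal is missing and the one the paper uses for Theorem \ref{mainthm-2}: (i) a purely pointwise algebraic comparison, via (\ref{equ-20}), (\ref{equ-11}), the Lagrange identity $\|\phi'\|^2\|\phi\|^2-|\phi'\cdot\overline{\phi}|^2=\|\phi'\times\bar\phi\|^2$, the bound $|\bar\phi\times\phi\cdot\phi'|\leq\|\phi'\times\bar\phi\|\,\|\phi\|$ and quasiconformality, giving
\begin{equation*}
|\mathcal{K}_{ds^2}|\leq\left(\tfrac{K^2+1}{2K}\right)^4|\mathcal{K}_{\Gamma}|,
\end{equation*}
with no use of $N_K$ whatsoever; and (ii) an Ahlfors--Schwarz estimate for the conformal metric $\Gamma=\|\tilde G\|^2|\omega|^2$ (Lemma \ref{lem-conformal}), combined with $d(p)\leq\sqrt{2}\,d_\Gamma(p)$ from (\ref{compar-metric}). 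Your write-up defers the role of $N_K$ to ``delicate bookkeeping'' without ever identifying where it enters; since the Lagrange identity makes the cross terms cancel exactly in the comparison above, you should either locate a genuine use for $N_K$ (in \cite{Chen-2021} it controls the discrepancy between $ds^2$ and $\Gamma$ at the level of $\Delta\log$ of the area element) or, better, follow the route just described, in which case the hypothesis can be dropped. To salvage your plan: replace $g$ by $G$, use the bounded-below holomorphic function $\phi\cdot\mathbf{d}$ as the raw material for the negatively curved metric on $M$ with respect to $\Gamma$, and only at the end convert $\mathcal{K}_\Gamma$ and $d_\Gamma$ into $\mathcal{K}_{ds^2}$ and $d$.
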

\begin{remark}
The condition ``$|(\phi'\cdot\phi)(\overline{\phi'}\cdot\phi)|/\|\phi\|^4\leq N_K$"
in Theorem \ref{Chen-2021} can be removed for the case of minimal surfaces. In other words,  it is automatically satisfied.
This can be seen by considering a minimal immersion
 $X$, where $h=\phi\cdot\phi=0$.
Taking differential on  both sides of $\phi\cdot\phi=0$,  we obtain $\phi'\cdot\phi=0$.
\end{remark}
It is worth noting that Theorem \ref{Chen-2021}
requires
the normal vectors of $M$  to
 form a minimum angle of
$\theta>0$  with a fixed direction,
which implies that the normal $\mathfrak{n}$  omits a certain neighborhood of certain directions.
In this paper,
we delves further into the study of Gauss curvature estimates for $K$-quasiconformal harmonic surface in ${\mathbb R}^3$
and present  an accurate improvement of  Theorem \ref{Chen-2021} by establishing that
(\ref{thm1.1}) holds when  the normal
$\mathfrak{n}$ omits 7 directions such that any three of these  directions are not contained in a plane in $\mathbb{R}^3$.
Importantly,
we demonstrate that the condition ``$|(\phi'\cdot\phi)(\overline{\phi'}\cdot\phi)|/\|\phi\|^4\leq N_K$" is not necessary  for the case of $K$-quasiconformal harmonic surfaces.
One of our main results is  stated  as follows.
\begin{theorem}\label{mainthm-2}
Let $X:M\rightarrow{\mathbb R}^3$ be a $K$-quasiconformal harmonic surface and
$\mathfrak{n}$ be the unit normal map of $M$.
If its Gauss map (i.e., the normal $\mathfrak{n}$) omits $7$ directions $\mathbf{d}_1,\cdots,\mathbf{d}_7$ in $S^2$
such that no three of them are contained in a plane in ${\mathbb R}^3$,
then there exists a positive constant $C$ that depends only on $\mathbf{d}_1,\cdots,\mathbf{d}_7$ satisfying the following inequality for all $p\in M$:
\begin{equation}\label{Curvature}
|\mathcal{K}(p)|\leq C/d(p)^2.
\end{equation}
\end{theorem}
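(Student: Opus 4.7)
The plan is to establish Theorem \ref{mainthm-2} by a standard contradiction-and-rescaling argument of Bloch--Zalcman type, with the limit surface being controlled by the completeness theorem from \cite{Chen-2021}. Suppose the desired estimate fails. Then for every $n\in\mathbb{N}$ one can find a $K$-quasiconformal harmonic surface $X_n:M_n\to\mathbb{R}^3$ whose Gauss map omits the prescribed seven directions $\mathbf{d}_1,\dots,\mathbf{d}_7$, together with a point $p_n\in M_n$ satisfying $|\mathcal{K}_n(p_n)|\,d_n(p_n)^2 > n$. The aim is to transform this sequence into a complete nonflat $K$-quasiconformal harmonic surface in $\mathbb{R}^3$ whose unit normal $\mathfrak{n}$ still omits $\mathbf{d}_1,\dots,\mathbf{d}_7$, which would contradict the main result of \cite{Chen-2021} asserting that at most six such directions can be omitted.

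The first step is the rescaling. Around each $p_n$ I would choose, using a Brody-type reparametrization applied to the conformal factor of the induced metric, a new point $q_n\in M_n$ and a scale $r_n \leq d_n(q_n)$ on which the rescaled curvature $|\mathcal{K}_n(q_n)|r_n^2$ is maximized along the surface on a geodesic disk of radius $r_n$, yet still tends to infinity. Rescale the immersion by $Y_n(z):=(X_n(q_n + \lambda_n z) - X_n(q_n))/\mu_n$ on enlarging disks, where $\lambda_n$ is chosen in the domain parameter and $\mu_n$ is chosen so that $\|\partial Y_n/\partial z\|^2$ is normalized at the origin. Harmonicity is invariant under affine changes in both the domain and the target, and the $K$-quasiconformality condition, being a pointwise ratio between the conformal part $\Gamma$ and the Hopf differential in the decomposition of $ds^2$, is scale-invariant as well; thus each $Y_n$ is again a $K$-quasiconformal harmonic surface and its Gauss map still omits $\mathbf{d}_1,\dots,\mathbf{d}_7$.

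The second step is to extract a limit. By standard elliptic (in fact harmonic) compactness applied to $Y_n$ and its derivatives $\phi_n=\partial Y_n/\partial z$, together with control of the Hopf differential $h_n=\phi_n\cdot\phi_n$ provided by the $K$-quasiconformal inequality, a subsequence converges locally uniformly in $C^\infty$ to a harmonic map $Y_\infty:\tilde M\to\mathbb{R}^3$. The normalization at the origin together with the choice of scales ensures that $Y_\infty$ is nonflat (the curvature at the origin is nonzero by construction), that $Y_\infty$ is defined on the whole plane, and, via a Hurwitz-type argument for the components of $\mathfrak{n}$, that its unit normal still avoids the seven directions. The $K$-quasiconformality is preserved in the limit since it is a pointwise inequality between continuous quantities. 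A Brody-style argument on the maximal choice of $(q_n, r_n)$ shows that the conformal metric on $\tilde M$ is complete, so $Y_\infty$ is a complete nonflat $K$-quasiconformal harmonic surface in $\mathbb{R}^3$ whose Gauss map omits $\mathbf{d}_1,\dots,\mathbf{d}_7$, contradicting \cite{Chen-2021}.

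The main obstacle I anticipate lies in the rescaling step, specifically in ensuring that the limit surface is genuinely complete and that the seven exceptional directions persist, despite the Hopf differential being allowed to be nonzero. Unlike the minimal case, where $h\equiv 0$ simplifies the decomposition of $ds^2$, here one must verify that the chosen normalization keeps both $\Gamma$ and $h$ under simultaneous control so that $ds^2$ does not degenerate in the limit; this is precisely what the $K$-quasiconformality buys us, but implementing it requires a careful bookkeeping of how $\phi$, $\phi'$ and $h$ transform under the rescaling, in the spirit of the computation carried out in the proof of Theorem \ref{Chen-2021}. Once this bookkeeping is done, completeness of the limiting conformal metric follows from the standard maximal-scale selection argument of Brody.
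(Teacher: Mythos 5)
Your route is genuinely different from the paper's. The paper argues directly and effectively: Lemma \ref{lem1} converts the seven omitted directions (no three coplanar) into seven hyperplanes in general position omitted by the generalized Gauss map $G$; Lemma \ref{lem-conformal} (applied with $n=2$, $m=1$, for which more than $6$ omitted hyperplanes are required) then gives $|\mathcal{K}_\Gamma(p)|\,d_\Gamma(p)^2\leq C$ for the Klotz metric $\Gamma=2\|\phi\|^2|dz|^2$; finally an explicit pointwise computation using (\ref{equ-11}), (\ref{equ-20}), (\ref{QCD2}) and the Lagrange identity yields $|\mathcal{K}_{ds^2}|\leq\left(\frac{K^2+1}{2K}\right)^4|\mathcal{K}_\Gamma|$, together with $d(p)\leq\sqrt{2}\,d_\Gamma(p)$ from (\ref{compar-metric}). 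No compactness or contradiction is involved, and no appeal is made to the qualitative theorem of \cite{Chen-2021}.

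Your proposal has a genuine gap at its decisive step: the completeness of the blow-up limit. A Brody-type reparametrization normalizing $\|\partial Y_n/\partial z\|$ at the origin yields, at best, a limit defined on all of $\mathbb{C}$ with a \emph{bounded} conformal factor, and a metric $\|\phi_\infty\|^2|dz|^2$ on $\mathbb{C}$ with $\|\phi_\infty\|$ bounded above need not be complete (take $\|\phi_\infty\|=(1+|z|^2)^{-1}$). Without completeness you cannot invoke the result of \cite{Chen-2021} on complete surfaces, so the contradiction never materializes; this is exactly why the curvature-estimate literature (Fujimoto, Ru, Osserman--Ru, and the present paper) works directly with negatively curved pseudo-metrics and the Ahlfors--Schwarz lemma rather than by rescaling. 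To repair the argument you would need to (i) select $(q_n,r_n)$ by almost maximizing $|\mathcal{K}_n|^{1/2}\mathrm{dist}(\cdot,\partial M_n)$ and normalize the \emph{curvature} (not $\|\phi\|$) to be $1$ at the base point and bounded on geodesic balls of radii $R_n\to\infty$ --- your single normalization of $\|\phi\|$ at the origin does not by itself guarantee that the limit is nonflat or even an immersion; (ii) rule out collapse so that pointed Cheeger--Gromov limits exist and, containing geodesic balls of every radius about the base point, are complete; and (iii) run the Hurwitz argument on the holomorphic functions $\phi\cdot\mathbf{d}_j$ (via Lemma \ref{lem1}) to keep the seven directions omitted in the limit. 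Steps (i)--(ii) are where the real work lies and are precisely what "a Brody-style argument shows the conformal metric is complete" glosses over. Note also that any such compactness argument fixes $K$ throughout, so the resulting constant necessarily depends on $K$ as well as on $\mathbf{d}_1,\cdots,\mathbf{d}_7$ (as, in fact, does the constant produced by the paper's own proof).
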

If the surface $M$ is complete,
then for any $p\in M$,
$d(p)\equiv\infty$, and (\ref{Curvature}) holds.
Consequently,
 Theorem  \ref{mainthm-2} implies that
the unit normal map $\mathfrak{n}$ of a nonflat complete
 $K$-quasiconformal harmonic surface
omits at most 6 directions in $S^2$ such that any three of these directions
are not contained in a plane in ${\mathbb R}^3$,
thereby  recovering \cite[Theorem 4.7]{Chen-2021}.\par

\medskip
In addition to our investigation on Gauss curvature estimate, we also explore the value distribution properties of Gauss map for  immersed harmonic surfaces in $\mathbb{R}^n$.
This study was initiated by R. Osserman and S. S. Chern \cite{Chern-1965,CO-1967,OR-1964}
who focused on the case of complete minimal surfaces immersed in ${\mathbb R}^n$.
Over the past few decades, many important work has been made in this area(see \cite{Fujimoto-1990,OS-1997,RU-1991}).
For instance,
H. Fujimoto \cite{Fujimoto-1990} has shown that the Gauss map of  nonflat minimal surfaces in $\mathbb{R}^n$
can omit at most $n(n+1)/2$ hyperplanes in general position in ${\mathbb P}^{n-1}(\mathbb C)$, assuming that  $G$ is nondegenerate.
M. Ru\cite{RU-1991} later removed
the ``nondegenerate" assumption.
Inspired by those work on minimal surfaces,
we give the modified defect(see Definition \ref{def-1}) relation for the  Gauss map
of immersed harmonic surfaces in $\mathbb{R}^n$.
\begin{theorem}\label{mainthm-1}
Let $X:M\rightarrow \mathbb{R}^n$ be a nonflat weakly complete
harmonic surface with the induced metric, where $M$ is an open Riemann surface
and $G:M\rightarrow{\mathbb P}^{n-1}({\mathbb C})$ be the generalized Gauss map.
If $G$ is $k-$nondegenerate, where $1\leq k\leq n-1$,
then for any  hyperplanes $H_1,\cdots, H_q\in{\mathbb P}^{n-1}({\mathbb C})$ in general position,
we have the following inequality:
$$\sum_{j=1}^q\delta_G^H(H_j)\leq (2n-k-1)(k/2+1).$$
\end{theorem}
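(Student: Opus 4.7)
The plan is to adapt the Fujimoto--Ru defect framework for holomorphic curves to the harmonic setting. The key observation that makes this transfer possible is that for any harmonic immersion $X:M\to\mathbb R^n$ the $(1,0)$-differential $\phi=\partial X/\partial z$ is holomorphic, so the generalized Gauss map $G=[\phi]:M\to\mathbb P^{n-1}(\mathbb C)$ is a holomorphic curve on the open Riemann surface $M$, and the Cartan--Nochka--Ahlfors machinery for holomorphic curves is fully available. The departure from the minimal case lies only in the induced metric: $ds^2$ splits as the conformal part $\Gamma=\|\phi\|^{2}|dz|^{2}$ plus the Hopf differential, and ``weakly complete'' is to be read as completeness of $\Gamma$.

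The first step is to pass to the universal cover $\pi:\widetilde M\to M$. The parabolic case $\widetilde M\cong\mathbb C$ reduces, via polynomial-growth estimates on the associated curves, to the classical statement over $\mathbb P^{n-1}(\mathbb C)$. The substantive case is $\widetilde M\cong\Delta$: lift $G$ to a $k$-nondegenerate holomorphic map $\widetilde G:\Delta\to\mathbb P^{n-1}(\mathbb C)$ and lift $\Gamma$ to a complete metric $\widetilde\Gamma$ on $\Delta$. For each hyperplane $H_j$ form the contact function $\varphi_j=W_{k}(H_j\circ\widetilde G)$, the Wronskian along the filtration $\widetilde G=\widetilde G_0,\widetilde G_1,\dots,\widetilde G_k$ of associated curves.

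Suppose, toward contradiction, that $\sum_{j=1}^{q}\delta_G^{H}(H_j)>(2n-k-1)(k/2+1)$. Choose $\eta>0$ and weights $\lambda_j\in(0,1)$ so that the strict inequality persists after perturbation, and consider the singular pseudometric
\begin{equation*}
d\tau^{2}=\Bigg(\prod_{j=1}^{q}\frac{|H_{j}(\widetilde G)|^{\lambda_{j}-\eta}}{|\varphi_{j}|^{\lambda_{j}/(k+1)}}\Bigg)^{\!2}|dz|^{2}
\end{equation*}
on $\Delta$ minus its singular locus. A Cartan-type identity for holomorphic curves with a $k$-step associated filtration, combined with the general-position hypothesis on $\{H_j\}$, forces the Gaussian curvature of $d\tau^{2}$ to be bounded above by a strictly negative constant depending only on $\eta$ and the $\lambda_j$. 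The Ahlfors--Schwarz lemma then majorizes $d\tau^{2}$ by a constant multiple of the Poincar\'e metric on $\Delta$. On the other hand, the definition of the modified defect $\delta_G^H$ together with weak completeness of $\widetilde\Gamma$ compels $d\tau^{2}$ to diverge along every divergent path in $\Delta$, contradicting this majorization; hence the assumed strict inequality cannot hold.

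The main obstacle is the construction of the pseudometric and the curvature estimate: one must pin down the exponents $\lambda_j$ so that the curvature bound yields exactly the coefficient $(2n-k-1)(k/2+1)$, and simultaneously so that the \emph{modified} part of $\delta_G^H$ is precisely what absorbs the extra terms arising from the Hopf-differential (nonconformal) contribution when $d\tau^{2}$ is compared to $\widetilde\Gamma$ rather than to the full induced metric. This is where the harmonic structure subtly enters; once these numerics are in place, the remaining steps follow the Fujimoto--Ru template.
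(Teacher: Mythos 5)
Your overall template (pass to the universal cover, build a singular pseudometric from the associated curves, apply Ahlfors--Schwarz, contradict completeness) is the same one the paper uses, but the step where you actually derive the contradiction has a genuine gap. You write that the Ahlfors--Schwarz lemma majorizes $d\tau^{2}$ by the Poincar\'e metric, and that weak completeness forces $d\tau^{2}$ to ``diverge along every divergent path,'' contradicting the majorization. This is not a contradiction: the Poincar\'e metric on $\Delta$ is itself complete, so a metric dominated by it can perfectly well have infinite length along every divergent path. The actual argument (Fujimoto's, followed in the paper) requires the additional developing-map construction: one defines a flat metric $d\sigma^{2}=\Omega^{2}|dz|^{2}$ on $M^{\ast}$, develops the universal cover onto a maximal disc $\Delta_{R_0}$ via a local isometry $\Psi$ with $R_0<\infty$ (Liouville), locates a boundary point $a_0$ whose radial $\Psi$-image is divergent in $M$, and then shows that the $\Gamma$-length of that \emph{particular} curve is bounded by $C\int_0^{R_0}(R_0-t)^{-\tau}\,dt<\infty$ for an exponent $\tau\in(0,1)$ engineered into the metric. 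The finiteness of this integral, not the Schwarz majorization alone, is what contradicts weak completeness; without this step your proof does not close.

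Two further points where your sketch diverges from what is actually needed. First, the coefficient $(2n-k-1)(k/2+1)$ does not come from general position alone: since $G$ is only $k$-nondegenerate, its image lies in a $\mathbb{P}^{k}\subset\mathbb{P}^{n-1}$, where the restricted hyperplanes are merely in $(n-1)$-subgeneral position, and one must invoke the Nochka weights $\varpi(j)$ (Lemmas 4.3--4.4 of the paper) to convert $\sum_j\varpi(j)(1-\eta_j)-(k+1)>0$ into the stated bound; your unexplained weights $\lambda_j$ would need to be exactly these. Second, your closing remark that the modified part of $\delta_G^H$ ``absorbs the Hopf-differential contribution'' misreads where the harmonic structure enters: the entire argument is run against the conformal Klotz metric $\Gamma=2\|\phi\|^{2}|dz|^{2}$ (this is what weak completeness refers to), and the Hopf differential never appears. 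The harmonic functions $\mu_j$ in the definition of $\delta_G^H$ are needed instead to control the pseudometric near the zeros of $F(H_j)$ (via $\nu_A\ge 0$ in the paper's Lemma 5.5), and they must appear explicitly as factors $e^{\mu_j\varpi(j)}$ in your $d\tau^{2}$ for the curvature and continuity estimates to hold.
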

\begin{remark}
It is worth noting that in the case of a nonflat weakly complete harmonic surface $X:M\rightarrow \mathbb{R}^n$,
Theorem \ref{mainthm-1} guarantees  that the generalized Gauss map $G$ omits at most
$(2n-k-1)(k/2+1)$ hyperplanes in general position in ${\mathbb P}^{n-1}(\mathbb C)$.
This result extends previous findings on minimal surfaces to the case of more general harmonic surfaces. Furthermore,
 if the generalized Gauss map $G$ omits more than $n(n+1)/2$ hyperplanes in general position in ${\mathbb P}^{n-1}(\mathbb C)$,
 it implies that  $X(M)$ must be lie in a $2-$plane.
\end{remark}

\section{Immersed harmonic surfaces and induced metric}

\subsection{Immersed harmonic surfaces in ${\mathbb R}^n$}
Consider a  regular  and immersed map $X=(x_1,\cdots,x_n): M\rightarrow {\mathbb R}^n$,
where $M$ is a smooth oriented two-manifold without boundary.
By choosing a  local coordinate
$(u, v)$ and let $z=u+iv$, $M$ can be regarded as a Riemann surface.
The surface $X(M)$ is referred to as {\it an immersed harmonic surface} in  ${\mathbb R}^n$ if the map $X$ is harmonic, meaning that
$$\bigtriangleup X := \left(\frac{\partial^2 x_1}{\partial z\partial \bar{z}}, \cdots,\frac{\partial^2 x_n}{\partial z\partial \bar{z}}\right)\equiv \mathbf{0}$$
where ${\partial \over \partial z} = {1\over 2} (\partial/\partial u - \sqrt{-1} \partial/\partial v)$ and
${\partial \over \partial \bar{z}} = {1\over 2} (\partial/\partial u + \sqrt{-1} \partial/\partial v)$.\par

\medskip
Set $\phi_k={\partial x_k \over \partial z}$ for $k=1,\cdots,n$.
The map $X$ is harmonic if and only if
\begin{equation*}
\phi: ={\partial X\over \partial z}   =(\phi_1, \cdots, \phi_n)
\end{equation*}
is holomorphic.
Although $\phi_k$ are only locally defined, the holomorphic one-forms $\Phi_k: = \phi_k dz$ can be globally defined on $M$.
Consequently,
the holomorphic map  $G: = [\Phi_1:\cdots:\Phi_n]: M\rightarrow {\mathbb P}^{n-1}({\mathbb C})$ is well-defined
and we call it the {\it generalized Gauss map} of the harmonic
surface $X(M)$ (also see \cite{Chen-2021}).

\subsection{Induced metric}
Let $ds^2$ be the metric on $M$  induced by $X$ from the standard  inner product on ${\Bbb R}^n$.
In term of local coordinate $(u,v)$, the first fundamental form of $ds^2$ is given by
\begin{equation}\label{HST1}
I=ds^2=Edu^2 +2Fdudv+Gdv^2
\end{equation}
with \begin{equation*}
E=X_u\cdot X_u, \ F=X_u\cdot X_v,\ G=X_v\cdot X_v.
\end{equation*}
By utilizing the complex local coordinate $(z, \bar z)$, we can rewrite (\ref{HST1}) as the following form
\begin{equation}\label{pro1.1.}
ds^2=hdz^2+2\|\phi\|^2|dz|^2+\overline{hd z^2},
\end{equation}
where
$h=\phi\cdot \phi=\sum_{k=1}^n\phi_k^2.$
The regularity of $X$ ensures $\|\phi\|^2:=\sum_{k=1}^n|\phi_k|^2\neq 0$.
Hence,
the induced metric $ds^2$ can be decomposed into two parts:
the {\it associated conformal metric} $\Gamma:=2 \|\phi\|^2 |dz|^2$ (usually known as the {\it Klotz metric}) and
the {\it Hopf differential} $hdz^2$.
It is evident  that $|h|<\|\phi\|^2$ and
\begin{equation}\label{compar-metric}
ds^2\leq 4\|\phi\|^2|dz|^2.
\end{equation}
We observed that if $ds^2$ is complete, then the associated conformal metric $\Gamma$ is also complete (see also in \cite[ Lemma 1]{KT-1976}).
However, the converse is not necessarily true.
In the following, the immersion $X$ is said to be {\it weakly complete} if the
 associated conformal metric  $\Gamma$ is complete.
In particular, if $h=0$,
then we have
$$ds^2=\Gamma, ~~E=G, ~~F=0.$$
This implies $(u,v)$ is an isothermal system,
and $X(M)$ becomes a minimal surface immersed in $\mathbb{R}^3$.
In other words, $M$ is considered as a Riemann surface with the conformal metric $ds^2$
by associating a holomorphic local coordinate $z=u+iv$ for each  isothermal system $(u,v)$.

\subsection{Estimate of Gauss curvature}
We now derive the  expression for the  curvature of a harmonic immersed surface in $\mathbb{R}^3$.
Let $\mathcal{K}_{ds^2}$ denote the intrinsic curvature of the induced metric $ds^2$ and $\mathcal{K}_\Gamma$ denote the Gauss curvature with respect to the Klotz metric $\Gamma:=2\|\phi\|^2 |dz|^2$.
\begin{definition}\cite{Fujimoto-1993}\label{def-3}
Let $M$ be a Riemann surface with a metric $\Gamma$
which is conformal, namely, represented as
$$\Gamma=\rho_z^2|dz|^2$$
with a positive $C^\infty$ function $\rho_z$
in term of a holomorphic  local coordinate $z$.
For each point $p\in M$, the Gauss curvature of $M$
at $p$ is given by
$$\mathcal{K}_{\Gamma}:=-\frac{\Delta_z\log \rho_z}{\rho_z^2}.$$
\end{definition}
Applying the definition to the Klotz metric $\Gamma:=2\|\phi\|^2 |dz|^2$, we obtain
\begin{equation}\label{equ-11}
\mathcal{K}_\Gamma=-\frac{\Delta\log \sqrt{2}\|\phi\|}{2\|\phi\|^2}
=-\frac{\|{\phi}'\|^2\|{\phi}\|^2-|{\phi}'\cdot\overline{{\phi}}|^2}{(\| {\phi}\|^2)^3}.
\end{equation}
Now, let's consider
the second fundamental form, which can be expressed as
$$II(\mathfrak{n})= Ldu^2+2Mdudv+Ndv^2,$$
with
\begin{equation*}
L=X_{uu}\cdot \mathfrak{n}, ~M=X_{uv}\cdot \mathfrak{n}, ~N=X_{vv}\cdot \mathfrak{n}.
\end{equation*}
Furthermore, we have
\begin{eqnarray*}
X_u&=&{\phi}+\overline{\phi},\ \ X_v=\sqrt{-1}(\phi-\overline{\phi})\\
X_{uu}&=&{\phi}'+\overline{{\phi}'},\ \  X_{uv}=\sqrt{-1}({\phi}'-\overline{{\phi}'}), \ \ X_{vv}=-({\phi}'+\overline{{\phi}'})
\end{eqnarray*}
and
\begin{equation*}
\mathfrak{n}=\frac{X_u\times X_v}{\|X_u\times X_v\|}=\frac{\sqrt{-1}(\overline{{\phi}}\times {\phi})}{\sqrt{\|{\phi}\|^4-|h|^2}}.
\end{equation*}
We have the following expression for $\mathcal{K}_{ds^2}$:
\begin{eqnarray}\label{equ-20}
\mathcal{K}_{ds^2}=\frac{LN-M^2}{EG-F^2}=-4\frac{|X_u\times X_v\cdot \overline{{\phi}'}|^2}{(EG-F^2)^2}=-\frac{|\bar{\phi}\times{{ \phi}}\cdot{\phi}'|^2}{(\|{\phi}\|^4-|h|^2)^2}.
\end{eqnarray}
 This expression relates the intrinsic curvature
$\mathcal{K}_{ds^2}$
to the cross product of the partial derivatives of the immersion $X$, denoted by $X_u$ and $X_v$,
and the conjugate of the derivative of $\phi$.
It also involves the metric coefficients
$E,F,G$ and the norm of the conformal factor $\|\phi\|$.
According to Lemma 1 in \cite{KT-1980},
there exists a positive function $\mu$ such that
\begin{equation*}
\mathcal{K}_\Gamma \leq \mu \mathcal{K}_{ds^2}\leq0.
\end{equation*}

\section{Estimate of Gauss curvature for $K$-QC harmonic surfaces}
In this section, we will discuss K-quasiconformal harmonic surfaces in $\mathbb{R}^3$.
\subsection{$K$-QC harmonic surfaces in $\mathbb{R}^3$}
Let $M$ be an open Riemann surface and  $X=(x_1,x_2,x_3):M \rightarrow \mathbb{R}^3$ be a
  harmonic immersion.
We can express the norm of the gradient of $X$ as
\begin{equation}\label{HST4}
\|\nabla X\|^2=E+G=4\|\phi\|^2,
\end{equation}
where $\|\nabla X\|^2$ is the Hilbert-Schmidt norm defined by
$\|\nabla X\|^2:=\|X_u\|^2+ \|X_v\|^2$.
Also the Jacobian of $X$ is given by
\begin{equation}\label{HST5}
 J_X=\|X_u\times X_v\|=\sqrt{EG-F^2}=2\sqrt{\|\phi\|^4-|h|^2}.
\end{equation}
An immersion $X=(x_1,x_2,x_3): M\rightarrow \mathbb{R}^3$ is
called $K$\textit{-quasiconformal}(for short \textit{$K$-QC}) if it  satisfies the  inequality
\begin{equation}\label{QCD1}
||\nabla X||^2\leq \left(K+\frac{1}{K}\right)J_X,
\end{equation}
which is equivalent to
\begin{equation}\label{QCD2}
\|\phi\|^2\leq \frac{K^2+1}{2K}\sqrt{\|\phi\|^4-|h|^2}.
\end{equation}
A Riemann surface $M$ that admits a $K$-quasiconformal harmonic immersion $X$ into $\mathbb{R}^3$ is referred to as a
$K$-\textit{quasiconformal harmonic surface}.
It is important to note that we adopt the definition of quasiconformality given by D. Kalaj \cite{KD-2013} (see also \cite{AL-2013}).

\subsubsection{Minimal surfaces immersed in $\mathbb{R}^3$}
If we set $K=1$ in (\ref{QCD2}), we obtain $h=0$, which leads to  the following two relations
\begin{equation*}{}
\|X_u\|=\|X_v\| \ \ \text{and}\ \ X_u\cdot X_v=0.
\end{equation*}
In this case,
 we say that $X$ is an isothermal parametrization (isothermal coordinate) of the surface $M$.
 Therefore,
 minimal surfaces can be considered as $1$-quasiconformal harmonic surfaces.

\subsection{The comparison between $ds^2$ and $\Gamma$ in $K$-QC case}
For the $K$-quasiconformal harmonic immersion $X$,
we have the following
the relation (see \cite[Lemma 4.4]{Chen-2021})
\begin{equation*}
\frac{2}{K^2+1}\Gamma\leq ds^2\leq \frac{2K^2}{K^2+1}\Gamma.
\end{equation*}
This inequality provides a comparison between the two metrics, indicating that the induced metric $ds^2$
is bounded by a constant multiple of the conformal metric
$\Gamma$.

\subsection{The relationship between two Gauss maps of $K$-QC harmonic surfaces}
For a harmonic surface in $\mathbb{R}^3$,
both the classical Gauss map $\mathfrak{n}$(i.e. unit normal map) and
the generalized Gauss map $G$ can be defined.
The author and collaborators \cite{Chen-2021} further showed
the  relationship between  $\mathfrak{n}$ and $G$ as following:
\begin{lemma}\cite{Chen-2021}\label{lem1}
Let $M$ be a $K$-quasiconformal harmonic surface in $\mathbb{R}^3$. Then for any unit normal vector $\mathfrak{n}$  and   unit vector $\mathfrak{b}$ at a  point $p$ on $M$, we have the following inequality:
\begin{equation}\label{pro1.8}
\frac{K^2+1}{2K^2}\frac{|\phi\cdot \mathfrak{b}|^2}{\|\phi\|^2}\leq \frac{1-|\mathfrak{n}\cdot \mathfrak{b}|^2}{2}
\leq \frac{K^2+1}{2}\frac{|\phi\cdot \mathfrak{b}|^2}{\|\phi\|^2}.
\end{equation}
In particular, when $K=1$, the inequality simplifies to:
\begin{equation}\label{pro1.10}
\frac{1-|\mathfrak{n}\cdot \mathfrak{b}|^2}{2}=\frac{|\phi\cdot \mathfrak{b}|^2}{\|\phi\|^2}.
\end{equation}
\end{lemma}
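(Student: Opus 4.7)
Plan: I would reduce both sides of (\ref{pro1.8}) to quantities on the tangent plane of $M$ at $p$, and then feed in the $K$-QC bound (\ref{QCD2}).

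First, write $\phi=a+ib$ with $a,b\in\mathbb{R}^3$. From $X_u=\phi+\overline{\phi}=2a$ and $X_v=\sqrt{-1}(\phi-\overline{\phi})=-2b$, the real vectors $a$ and $b$ span the tangent plane at $p$ and $\mathfrak{n}$ is orthogonal to both; a short expansion gives $\|\phi\|^2=\|a\|^2+\|b\|^2$ and $\|\phi\|^4-|h|^2=4\|a\times b\|^2$. Next, decompose the unit vector $\mathfrak{b}=v+(\mathfrak{n}\cdot\mathfrak{b})\mathfrak{n}$, where $v$ is the tangential projection. Because $\mathfrak{b}$ is real and $\mathfrak{n}\perp a,b$, one obtains at once
$$\|v\|^2=1-(\mathfrak{n}\cdot\mathfrak{b})^2, \qquad |\phi\cdot\mathfrak{b}|^2=(a\cdot v)^2+(b\cdot v)^2.$$

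The core step is the quadratic form $v\mapsto(a\cdot v)^2+(b\cdot v)^2$. Fix an orthonormal basis $\{e_1,e_2\}$ of the tangent plane and let $A$ be the $2\times 2$ real matrix whose rows are the components of $a,b$ in this basis. Then $|\phi\cdot\mathfrak{b}|^2=v^{T}(A^{T}A)v$, and the symmetric matrix $A^{T}A$ satisfies $\mathrm{tr}(A^{T}A)=\|\phi\|^2$ and $\det(A^{T}A)=(\det A)^{2}=\|a\times b\|^{2}=(\|\phi\|^4-|h|^2)/4$. Hence its two eigenvalues are $(\|\phi\|^2\pm|h|)/2$, which yields the sandwich
$$\frac{\|\phi\|^2-|h|}{2}\|v\|^2 \leq |\phi\cdot\mathfrak{b}|^2 \leq \frac{\|\phi\|^2+|h|}{2}\|v\|^2.$$

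To close the argument I would square (\ref{QCD2}) and rearrange to extract the pointwise bound $|h|\leq \|\phi\|^2(K^2-1)/(K^2+1)$. Substituting, one gets $\|\phi\|^2+|h|\leq 2K^2\|\phi\|^2/(K^2+1)$ and $\|\phi\|^2-|h|\geq 2\|\phi\|^2/(K^2+1)$; inverting these bounds and dividing through by $2\|\phi\|^2$ produces precisely the factors $(K^2+1)/(2K^2)$ and $(K^2+1)/2$ demanded by (\ref{pro1.8}). The identity (\ref{pro1.10}) is then automatic, since $K=1$ forces $h\equiv 0$ and the two eigenvalues of $A^{T}A$ coincide.

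Main obstacle: the only step that requires more than bookkeeping is the eigenvalue computation for $A^{T}A$, whose punchline is the clean identification $\det(A^{T}A)=\|a\times b\|^2=(\|\phi\|^4-|h|^2)/4$. Once this is in hand, the rest is a purely algebraic manipulation of the quasiconformality bound on $|h|$.
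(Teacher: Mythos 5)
Your argument is correct and complete. The paper itself gives no proof of this lemma --- it is quoted verbatim from [Chen--Liu--Ru, Pacific J. Math. 309 (2021)] --- so there is no in-paper proof to compare against; but every step of yours checks out: writing $\phi=a+ib$ gives $\|\phi\|^2=\|a\|^2+\|b\|^2$, $h=\|a\|^2-\|b\|^2+2i\,a\cdot b$, hence $\|\phi\|^4-|h|^2=4\|a\times b\|^2$; the quadratic form $v\mapsto (a\cdot v)^2+(b\cdot v)^2$ on the tangent plane has trace $\|\phi\|^2$ and determinant $(\|\phi\|^4-|h|^2)/4$, so its eigenvalues are exactly $(\|\phi\|^2\pm|h|)/2$; and squaring the quasiconformality inequality yields $|h|\leq\|\phi\|^2(K^2-1)/(K^2+1)$, which turns the eigenvalue sandwich into precisely the two constants $(K^2+1)/(2K^2)$ and $(K^2+1)/2$ of the statement, with equality throughout when $K=1$ forces $h=0$. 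This is the natural route (and, as far as the cited source goes, essentially the intended one); the only point worth making explicit in a write-up is that $a$ and $b$ span the tangent plane because $X$ is an immersion, so that the orthonormal-basis matrix $A$ is well defined and $\det(A^TA)=\|a\times b\|^2$ is legitimate.
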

The above result implies that
 for $K$-quasiconformal harmonic surface,
a normal vector $\mathfrak{n}$ makes an angle of at least $\theta$ with a given vector $\mathbf{d}$ if and only if its generalized Gauss map $G$ has a positive projective distance to a hyperplane $H$ with the unit normal $\mathbf{d}$.
It should be noted that
the additional condition of  $K$-quasiconformality is necessary.
In fact,
the conclusion does not hold for certain harmonic immersed surfaces,
as demonstrated by the counterexample provided by A. Alarc\'{o}n and F. J. L\'{o}pez in \cite{AL-2013}.\par

\medskip
To prove  Theorem \ref{mainthm-2}, we need the following result.
\begin{lemma}\cite[Theorem 2]{Chen-2021-MZ}\label{lem-conformal}
Let $M$ be an open Riemann surface and $G:M\rightarrow {\mathbb P}^{n}(\mathbb{C})$ be a nonconstant holomorphic map.  Consider the conformal metric  on $M$ given by
$$\Gamma=\|\tilde{G}\|^{2m}|\omega|^2,$$
where $\tilde{G}$ is a reduced representation of $G$,
 $\omega$ is a holomorphic 1-form, and  $m\in {\mathbb N}$.
Assume that $G$ omits more than $\frac{n+1}{2}(mn+2)$ hyperplanes in ${\mathbb P}^{n}(\mathbb{C})$ located in general position.
Then there exists a constant $C$, which depends only on the set of omitted hyperplanes,  such that  the following inequality holds for all $p\in M$:
\begin{eqnarray*}
|\mathcal{K}_\Gamma(p)|^{\frac{1}{2}}d_\Gamma(p)\leq C,
\end{eqnarray*}
where $\mathcal{K}_\Gamma(p)$ is the Gauss curvature of $M$ at $p$ with respect to the metric $ds^2$,
and $d_\Gamma(p)$ is the geodesic distance from $p$ to the boundary of $M$.
\end{lemma}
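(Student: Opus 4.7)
\smallskip

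\emph{Proof plan.} This is a higher-dimensional curvature-distance estimate in the spirit of Fujimoto's work on the Gauss map of complete minimal surfaces. The idea is to construct an auxiliary conformal pseudo-metric $d\tau^2$ on $M$ whose Gauss curvature is bounded above by $-1$ where it is smooth, and then apply an Ahlfors--Schwarz comparison on a holomorphic disk of $\Gamma$-radius $d_\Gamma(p)$ centred at $p$ to deduce the stated inequality.

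First I would fix a reduced representation $\tilde G=(g_0,\ldots,g_n)$ and let $f_j=\langle\tilde G,H_j\rangle$ be the linear forms cutting out the omitted hyperplanes, so each $f_j$ is nowhere-vanishing. Nochka's theorem produces weights $\omega_1,\ldots,\omega_q\in(0,1]$ and a Nochka constant $\theta\geq 1$ adapted to $H_1,\ldots,H_q$ in general position. With these I would define
\begin{equation*}
d\tau^2 \;=\; \Bigl(\|\tilde G\|^{A}\,\textstyle\prod_{j=1}^{q}|f_j|^{-B\omega_j}\,|W(\tilde G)|^{C}\Bigr)^{2\lambda}\,|\omega|^2,
\end{equation*}
where $W(\tilde G)$ denotes the Wronskian of $\tilde G$ and the positive constants $A,B,C,\lambda$ are to be selected. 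Computing $\mathcal{K}_{d\tau^2}$ via Definition~\ref{def-3} and using the Pl\"ucker/Cartan formula for $dd^c\log|W(\tilde G)|$ together with the Fubini--Study calculation of $dd^c\log\|\tilde G\|^2$, the hypothesis $q>\frac{n+1}{2}(mn+2)$ is exactly what permits a choice of exponents forcing $\mathcal{K}_{d\tau^2}\leq -1$ off the (isolated) zeros of $W(\tilde G)$. If $G$ is only $k$-nondegenerate the same scheme works after replacing $n$ by the projective dimension of the image and running the argument on the $k$-th associated curve.

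Next I would exploit that each $|f_j|/\|\tilde G\|\leq 1$ (after normalising the $H_j$) so that the product factor in the display is bounded above, and calibrate $A$ so that the total power of $\|\tilde G\|$ appearing in $d\tau^2$ equals $2m$; this yields a comparison $d\tau^2 \geq c_0\,\Gamma$ with $c_0=c_0(H_1,\ldots,H_q)>0$. Fix $p\in M$ and set $r=d_\Gamma(p)$. A standard lifting argument parametrises a $\Gamma$-geodesic disk of radius $r$ about $p$ by a holomorphic map $\varphi:\Delta_r\to M$ with $\varphi(0)=p$. Ahlfors' generalised Schwarz lemma applied to the upper-semicontinuous pseudo-metric $\varphi^*d\tau^2$ (curvature $\leq-1$) dominates it by the Poincar\'e metric on $\Delta_r$; evaluating at the origin and using $d\tau^2\geq c_0\,\Gamma$ yields $|\mathcal{K}_\Gamma(p)|^{1/2}\,r\leq C$, which is the claim.

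The principal obstacle is the selection of $A,B,C,\lambda$: the curvature bound $\mathcal{K}_{d\tau^2}\leq-1$ and the lower comparison $d\tau^2\geq c_0\,\Gamma$ pull the exponents in opposing directions, and the threshold $q>\frac{n+1}{2}(mn+2)$ is precisely what makes both feasible. A secondary technical point is that $d\tau^2$ blows up at zeros of $W(\tilde G)$, which are isolated; the classical extension of Ahlfors' lemma to pseudo-metrics with upper semicontinuous densities handles these singular points without affecting the estimate.
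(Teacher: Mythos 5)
The paper does not actually prove this lemma; it imports it verbatim from \cite[Theorem 2]{Chen-2021-MZ}, so your attempt can only be measured against that reference and against the parallel machinery the present paper does carry out in Section 5 (the construction (\ref{xi}), Lemmas \ref{lem-11}--\ref{lem-12}, and the developing-map argument in the proof of Theorem \ref{mainthm-1}). Your toolbox (Nochka weights, a Fujimoto-type auxiliary pseudo-metric, Ahlfors--Schwarz) is the right one, but two of your three steps fail as stated. First, the pointwise comparison $d\tau^2\geq c_0\,\Gamma$ is unobtainable: for $C>0$ the density of $d\tau^2$ carries the factor $|W(\tilde G)|^{2\lambda C}$, which vanishes on the zero set of the Wronskian while the density $\|\tilde G\|^{2m}|\omega|^2$ of $\Gamma$ does not (a reduced representation has no common zeros), and more generally $|W(\tilde G)|$ admits no positive lower bound against powers of $\|\tilde G\|$ depending only on the omitted hyperplanes --- so no calibration of $A,B,C,\lambda$ works, and dropping the Wronskian ($C=0$) destroys the curvature bound. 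Indeed, off the zeros of $W$ one has $dd^c\log|W(\tilde G)|=0$, so your bare density $\|\tilde G\|^{A}\prod_j|f_j|^{-B\omega_j}|W(\tilde G)|^{C}$ has $dd^c\log(\cdot)$ proportional to the single Fubini--Study pullback and does \emph{not} have curvature $\leq-1$; obtaining that bound requires the whole tower of derived curves $\|\tilde F_s\|$, the contact functions $\varphi_s(\mathbf a_j)$ and the damping factors $(N-\log\varphi_s(\mathbf a_j))^{-1}$, exactly as in (\ref{xi}) and Lemma \ref{lem-11}. Second, even granting $d\tau^2\geq c_0\Gamma$, Ahlfors--Schwarz at the centre bounds the metric density $\Gamma(p)\leq 4/(c_0r^2)$, which is not the assertion: the curvature $|\mathcal{K}_\Gamma(p)|$ is, up to constants, $\|\tilde G\wedge\tilde G'\|^2\big/\bigl(\|\tilde G\|^{4}\cdot\|\tilde G\|^{2m}|\omega_z|^2\bigr)$, and a bound on $\|\tilde G(p)\|^{2m}|\omega_z(p)|^2$ alone says nothing about it.

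The third step also has no content as written: there is no ``standard lifting argument'' realising the $\Gamma$-geodesic ball of radius $r=d_\Gamma(p)$ as the image of a holomorphic, locally isometric map $\varphi:\Delta_r\to M$ with $\varphi(0)=p$; geodesic balls of a general conformal metric are not developed Euclidean disks of the same radius. The actual proof in \cite{Chen-2021-MZ} (mirrored in Section 5 here) replaces this by the following: build a \emph{flat} pseudo-metric $d\sigma^2=\Omega^2|dz|^2$ from $\|G(H_j)\|$, the Wronskian and the minors $\xi_{js}$; develop the maximal Euclidean disk $\Delta_{R_0}$ into $M^\ast$ by a local isometry $\Psi$ with $\Psi(0)=p$; show $R_0<\infty$ and that some radial image $\Psi(L_{a_0})$ is divergent in $M$; and bound the $\Gamma$-length of that divergent curve by $C\int_0^{R_0}(R_0-t)^{-\tau}\,dt<\infty$ with $0<\tau<1$ --- this is where Lemma \ref{Schwarz} is really used, namely to bound $\Omega$ on the developed disk, not to compare $d\tau^2$ with $\Gamma$ at a point. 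Unwinding the normalisation of $\Omega$ at $p$ is what converts $d_\Gamma(p)\leq C\,R_0^{1-\tau}$ into $|\mathcal{K}_\Gamma(p)|^{1/2}d_\Gamma(p)\leq C$. Your outline skips precisely the places where the threshold $q>\frac{n+1}{2}(mn+2)$ and the specific form $\Gamma=\|\tilde G\|^{2m}|\omega|^2$ do real work.
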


\begin{proof}[The proof of Theorem \ref{mainthm-2}]
Let $X:M\rightarrow \mathbb{R}^3$ be a $K$-quasiconformal harmonic
surface, where $M$ is an open Riemann surface.
For each direction $\mathbf{d}=(d_1,d_2,d_3)\in S^2$,
it corresponds to a hyperplane $H_\mathbf{d}:=\{[z_1:z_2:z_3]|d_1z_1+d_2z_2+d_3z_3=0\}$.
Given the condition that the normal $\mathfrak{n}$ omits $7$ directions $\mathbf{d}_1,\cdots,\mathbf{d}_7$ in $S^2$
such that no three of them
lie in a plane in ${\mathbb R}^3$,
Lemma \ref{lem1} guarantees the existence of
 $7$ hyperplanes $H_1,\cdots,H_7$  in general position such that the generalized Gauss map $G$ omits these hyperplanes.
In  the following, we consider an open Riemann surface $M$ equipped with a conformal metric $\Gamma=2\|\phi\|^2|dz|^2$.
Let $\mathcal{K}_{ds^2}$ and $\mathcal{K}_\Gamma$ denote the
the  curvature  of the induced metric $ds^2$  and  the Klotz metric $\Gamma$ respectively.
By Lemma \ref{lem-conformal},
there exists a positive constant $C$ such that
\begin{equation}\label{equ-22}
|\mathcal{K}_\Gamma(p)|d_\Gamma(p)^2\leq C.
\end{equation}
On the other hand,
using (\ref{equ-20}) and (\ref{QCD2}),
we can deduce that
$$|\mathcal{K}_{ds^2}|=\frac{|\bar{\phi}\times{{ \phi}}\cdot{\phi}'|^2}{(\|{\phi}\|^4-|h|^2)^2}
\leq\frac{|\bar{\phi}\times{{ \phi}}\cdot{\phi}'|^2}{\|\phi\|^8}\left(\frac{K^2+1}{2K}\right)^4.$$
Together with (\ref{equ-11}),
we can derive the following inequality:
\begin{eqnarray*}
\left|{\mathcal{K}_{\Gamma}\over \mathcal{K}_{ds^2}}\right|&\geq&
\frac{\|{\phi}'\|^2\|{\phi}\|^2-|{\phi}'\cdot\overline{{\phi}}|^2}{|\bar{\phi}\times{{ \phi}}\cdot{\phi}'|^2}\cdot\|{\phi}\|^2\left({2K\over K^2+1}\right)^4\\
&\geq& \frac{\|{\phi}'\|^2\|{\phi}\|^2-|{\phi}'\cdot\overline{{\phi}}|^2}{\|\phi'\times{{ \bar{\phi}}}\|^2}\cdot\left({2K\over K^2+1}\right)^4.
\end{eqnarray*}
Using the Lagrange identities in complex form,
we have
$$\|{\phi}'\|^2\|{\phi}\|^2-|{\phi}'\cdot\overline{{\phi}}|^2=\|\phi'\times{{ \bar{\phi}}}\|^2.$$
Hence, we obtain
$$\left|{\mathcal{K}_{\Gamma}\over \mathcal{K}_{ds^2}}\right|\geq \left({2K\over K^2+1}\right)^4,$$
which implies that
$|\mathcal{K}_{ds^2}(p)|\leq\left({K^2+1\over 2K}\right)^4|\mathcal{K}_\Gamma(p)|$ holds for any $p\in M$.
By (\ref{compar-metric}),
we have $d(p)\leq \sqrt{2}d_\Gamma(p)$.
Therefore,  from (\ref{equ-22}), we can deduce that  for all $p\in M$,
$$|\mathcal{K}_{ds^2}(p)|d(p)^2\leq \left(\frac{K^2+1}{2K}\right)^4|\mathcal{K}_\Gamma(p)|\cdot 2d_\Gamma(p)^2\leq 2C,$$
where $C$ is a positive constant depending only on
$K$, $\mathbf{d}_1,\cdots,\mathbf{d}_7$.

\end{proof}

\section{Modified defect relation for the generalized Gauss map of harmonic surfaces in $\mathbb{R}^n$}
The classical defect relation in value distribution theory of
meromorphic functions is a well-known result.
It is stated as follows:
\begin{theorem}\label{classical defect}
Let $F$ be a nondegenerate holomorphic map of $\mathbb{C}$ into $\mathbb{P}^n(\mathbb{C})$.
Then
$$\sum_{1\leq j\leq q}\delta_F(H_j)\leq n+1$$
for arbitrary hyperplanes $H_1,\cdots,H_q$ in general position.
\end{theorem}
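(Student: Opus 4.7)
The plan is to derive Theorem \ref{classical defect} as a direct corollary of H. Cartan's second main theorem (SMT) for linearly nondegenerate holomorphic curves $F:\mathbb{C}\to\mathbb{P}^n(\mathbb{C})$. The stated inequality is essentially the ``defect form'' of the SMT, so my strategy is to first establish the SMT for $F$ and then rearrange using the first main theorem (FMT) together with the definition of the Nevanlinna defect.

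First I would fix a reduced representation $F=(f_0,\dots,f_n)$ by entire functions without common zeros. For each hyperplane $H_j$ with coefficient vector $a_j$, set $F(H_j):=\langle a_j,F\rangle$. Because $F$ is linearly nondegenerate, the Wronskian $W:=W(f_0,\dots,f_n)$ is not identically zero; and because the $H_j$ are in general position, for any index set $K=\{j_0,\dots,j_n\}\subset\{1,\dots,q\}$ the Wronskian $W(F(H_{j_0}),\dots,F(H_{j_n}))$ differs from $W$ only by a nonzero constant determinant. With this setup in place, I would introduce the Nevanlinna characteristic $T_F(r)$, the (unweighted) counting functions $N_F(r,H_j)$ and $N_W(r)$, and the proximity functions $m_F(r,H_j)$.

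The central step is Cartan's auxiliary inequality, obtained by applying the lemma on logarithmic derivatives to the meromorphic quotients $F(H_j)/f_{k}$ for a chosen nonvanishing coordinate, combined with a combinatorial ``best $(n+1)$-subset at each angle'' argument. This yields
$$\sum_{j=1}^{q} m_F(r,H_j)\leq (n+1)\,T_F(r)-N_W(r)+S_F(r),$$
where $S_F(r)=o(T_F(r))$ outside a set of finite Lebesgue measure in $r$. Combining this bound with the FMT $m_F(r,H_j)+N_F(r,H_j)=T_F(r)+O(1)$ and the trivial estimate $N_W(r)\geq 0$ gives the SMT in the form
$$(q-n-1)\,T_F(r)\leq \sum_{j=1}^{q} N_F(r,H_j)+S_F(r).$$
Dividing by $T_F(r)$, taking $\limsup$ as $r\to\infty$, and using the definition $\delta_F(H_j)=1-\limsup_{r\to\infty} N_F(r,H_j)/T_F(r)$ immediately yields $\sum_{j=1}^{q}\delta_F(H_j)\leq n+1$.

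The main obstacle is the auxiliary Wronskian inequality itself: its proof requires the several-variable logarithmic derivative lemma due to Cartan, together with a careful combinatorial selection of the $(n+1)$ linear forms $F(H_{j})$ of smallest modulus on each circle $|z|=r$, and the conversion of the product of $|F(H_j)|^{-1}$ over the chosen subset into $|W|^{-1}$ times a bounded factor (this is exactly where the general-position hypothesis is used). Once this estimate is in hand, the passage from SMT to the defect relation is purely bookkeeping with the FMT.
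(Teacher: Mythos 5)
Your outline is correct: it is the standard proof of H.~Cartan's Second Main Theorem (logarithmic derivative lemma applied to Wronskian quotients, the combinatorial selection of the $n+1$ forms of smallest modulus on each circle, then the First Main Theorem and the definition of the defect), and this is indeed how the classical defect relation is established. Note, however, that the paper itself offers no proof of this statement --- it is quoted as the well-known classical result of Cartan and used as a black box in the proof of Theorem \ref{mainthm-1} --- so there is no argument in the paper to compare against; your proposal simply supplies the standard external proof. The only imprecision worth flagging is the phrase about applying the logarithmic derivative lemma to the quotients $F(H_j)/f_k$: the lemma is actually applied to the logarithmic derivatives $F(H_j)^{(m)}/F(H_j)$ appearing in the expansion of $W(F(H_{j_0}),\dots,F(H_{j_n}))/\prod_{i}F(H_{j_i})$, but this does not affect the validity of the scheme.
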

In our context, we aim to establish a modified defect relation for the generalized Gauss map of harmonic surfaces in $\mathbb{R}^n$. To do so, we utilize the results obtained in the previous sections.
\subsection{A new type of modified defect}
In \cite{CO-1967}, S. S. Chern and R. Osserman established an important result regarding the Gauss map of non-flat complete minimal surfaces in $\mathbb{R}^n$.
They showed that this Gauss map intersects a dense set of
hyperplanes. H. Fujimoto \cite{Fujimoto-1983} further improved the understanding of the Gauss map by proving that for non-degenerate minimal surfaces in $\mathbb{R}^n$, the Gauss map fails to intersect at most $n^2$ hyperplanes in general position.
Expanding on Fujimoto's work, in \cite{Fujimoto-1990} He introduced some new types of
modified defects and provided a defect relation for a holomorphic map from
a Riemann surface into $\mathbb{P}^n(\mathbb{C})$.
\begin{definition}\cite{Fujimoto-1990}\label{def-1}
The new modified defect of $H_j$ for $F$ can be defined by
$$\delta^H_F(H_j):=1-\inf\{\eta\geq 0; \eta {\mbox ~satisfies~ condition} (*)\}.$$
Here, condition $(*)$ means that there exists a $[-\infty, \infty)-$valued continuous function
$\mu$ on $M$ which is harmonic on $M\setminus \{z:F(H_j)(z)=0\}$ and satisfies the following conditions
\begin{enumerate}
 \item[$(H1)$] $e^\mu\leq \|F\|^\eta$, {\mbox where} $\|F\|:=\|F_0\|=(|f_0|^2+\cdots+|f_k|^2)^{1/2}$,
 \item[$(H2)$] for each $z_0\in\{z:F(H_j)(z)=0\}$  there exists the limit
 \begin{equation*}
\lim_{z\rightarrow z_0}(\mu(z)-\min\{\nu_{F(H_j)},k\}\log|z-z_0|)\in[-\infty, \infty),
 \end{equation*}
where $z$ is a holomorphic local coordinate around $z_0$.
\end{enumerate}
\end{definition}
It was shown in  \cite{Fujimoto-1989,Fujimoto-1990}
that the modified defects satisfy $0\leq\delta^H_F(H_j)\leq1$.
Notably,
if $F(H_j)$ has no zero, i.e., $F$ omits the hyperplane $H_j$, then $\delta^H_F(H_j)=1$.
Focusing specifically on minimal surfaces, H. Fujimoto derived the following result.
\begin{theorem}\cite{Fujimoto-1990}
Let $M$ be a complete minimal surface in $\mathbb{R}^n$, and $G$ be the Gauss map of $M$.
If $G$ is non-degenerate, then
$$\sum_{1\leq j\leq q}\delta_G^H(H_j)\leq \frac{n(n+1)}{2}$$
for arbitrary hyperplanes $H_1, \cdots, H_q$ in general position.
\end{theorem}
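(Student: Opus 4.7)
The plan is to derive a contradiction via the Ahlfors--Schwarz lemma applied to a carefully constructed singular pseudo-metric on $M$. Suppose, for contradiction, that $\sum_{j=1}^q \delta_G^H(H_j) > n(n+1)/2$. By Definition \ref{def-1} one can choose constants $\eta_j \in [0,1)$ with $\delta_G^H(H_j) > 1-\eta_j$ and $\sum_{j=1}^q(1-\eta_j) > n(n+1)/2$, together with continuous functions $\mu_j: M \to [-\infty, \infty)$ that are harmonic on $M \setminus \{G(H_j) = 0\}$ and satisfy (H1) and (H2). After lifting to the universal cover, which preserves both completeness of $M$ and the induced metric $ds^2 = 2\|\phi\|^2|dz|^2$ (the latter because $X$ is minimal, so $h=0$), one may assume $M$ is simply connected.

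Next, the nondegeneracy of $G$ is exploited through its Pl\"ucker associated curves. Choosing a reduced representation $\tilde G = (\phi_1, \ldots, \phi_n)$, form
$$F_k = \tilde G \wedge \tilde G' \wedge \cdots \wedge \tilde G^{(k)}, \qquad k = 0, 1, \ldots, n-1.$$
Nondegeneracy forces each $\|F_k\|$ to vanish only on a discrete set, and the classical Pl\"ucker identities
$$\Delta \log \|F_k\|^2 \;=\; \frac{\|F_{k-1}\|^2\|F_{k+1}\|^2}{\|F_k\|^4}, \qquad 1 \le k \le n-2,$$
with the convention $\|F_{-1}\| = 1$, relate the Gauss curvatures of the Fubini--Study pullback metrics associated to successive $F_k$. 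The candidate pseudo-metric is
$$d\sigma^2 = \Bigl(\prod_{j=1}^q \tfrac{e^{\mu_j}}{|\tilde G(H_j)|}\Bigr)^{2t}\,\prod_{k=0}^{n-2}\|F_k\|^{2\alpha_k}\,|dz|^2,$$
with positive exponents $t$ and $\alpha_k$ to be chosen. Condition (H2) ensures that near a zero $z_0$ of $\tilde G(H_j)$ of order $\nu_j$, the singularity $|\tilde G(H_j)|^{-t}\sim |z-z_0|^{-t\nu_j}$ is cancelled by the prescribed vanishing $e^{t\mu_j}\sim |z-z_0|^{t\min(\nu_j,\,n-1)}$, so $d\sigma^2$ extends continuously across $\bigcup_j\{G(H_j) = 0\}$ once $t$ is sufficiently small. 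Condition (H1), $e^{\mu_j} \le \|\tilde G\|^{\eta_j}$, then yields a uniform comparison $d\sigma^2 \ge C\,\|\phi\|^2|dz|^2$ for some $C>0$.

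Computing $-\Delta\log$ of the density of $d\sigma^2$, and using the harmonicity of each $\mu_j$ off its zero set, the Pl\"ucker identities convert the resulting expression into a positive combination of the ratios $\|F_{k-1}\|^2\|F_{k+1}\|^2/\|F_k\|^4$. An AM--GM argument on these ratios, with the $\alpha_k$ chosen so that the Pl\"ucker contributions balance, shows that the Gauss curvature of $d\sigma^2$ is bounded above by $-\kappa <0$ \emph{precisely} when $\sum_j(1-\eta_j) > \sum_{k=0}^{n-1}(k+1) = n(n+1)/2$, which is our hypothesis. The Ahlfors--Schwarz lemma then forces the universal cover of $M$, equipped with $d\sigma^2$, to be dominated by the Poincar\'e metric of a disc, hence of finite $d\sigma^2$-diameter; combined with $d\sigma^2 \ge (C/2)\, ds^2$, this forces $ds^2$ to have finite diameter, contradicting completeness.

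The main obstacle will be the simultaneous choice of the exponents $t, \alpha_0, \ldots, \alpha_{n-2}$, which must satisfy three competing demands: (i) continuity of $d\sigma^2$ across the indeterminacy locus, which caps $t$ from above via (H2); (ii) the Pl\"ucker/AM--GM curvature estimate, which forces a specific algebraic relation among $(t, \alpha_k)$ and is where the exact threshold $n(n+1)/2$ enters; and (iii) preservation of the lower bound $d\sigma^2 \ge C\,\|\phi\|^2|dz|^2$. The sharpness of the constant $n(n+1)/2$ is forced by the identity $\sum_{k=0}^{n-1}(k+1) = n(n+1)/2$: at equality the curvature would be merely nonpositive rather than strictly negative, so the strict inequality in the hypothesis is essential to the argument.
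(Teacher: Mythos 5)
The paper does not reprove this statement (it is quoted from Fujimoto), but it proves the generalization Theorem \ref{mainthm-1} by exactly the strategy you outline, so the comparison is against that proof. Your overall architecture (contrapositive via the $\eta_j,\mu_j$ from Definition \ref{def-1}, a singular pseudo-metric built from the derived curves, Pl\"ucker identities plus a product-to-sum estimate, Ahlfors--Schwarz) is the right one, but the final step contains a genuine error. From ``curvature of $d\sigma^2$ bounded above by $-\kappa<0$'' the Ahlfors--Schwarz lemma gives $d\sigma^2\leq C\cdot(\text{Poincar\'e metric of }\Delta)$, and you conclude that $M$ has finite $d\sigma^2$-diameter. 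It does not: the Poincar\'e metric is itself complete, so being dominated by it yields no divergent curve of finite length and no contradiction with completeness. This is precisely why the paper (following Fujimoto and Ru) does not apply Ahlfors--Schwarz to $d\sigma^2$ directly. Instead it builds a \emph{flat} metric (\ref{equ-5}) on $M^\ast$, develops it via a maximal local isometry $\Psi:\Delta_{R_0}\to M^\ast$ with $R_0<\infty$ (Liouville), produces a divergent radial image curve $\Upsilon_{a_0}$, and only then uses the Schwarz-type bound of Lemma \ref{lem-12} to estimate the \emph{induced-metric} length of $\Upsilon_{a_0}$ by $\int_0^{R_0}(R_0-t)^{-\tau}\,dt$ with an exponent $\tau\in(0,1)$ engineered through the choice of $N$ and $\Lambda$ in (\ref{equ-4}). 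The fractional power $\tau<1$ is what makes the length finite; without it the integral diverges and the argument collapses.

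Two further points would need repair even before that step. First, your pseudo-metric omits the logarithmic correction factors $(N-\log\varphi_s(\mathbf{a}_j))^{-1}$ appearing in the paper's $\xi$ (equation (\ref{xi})): the naive product $\prod_k\|F_k\|^{2\alpha_k}\prod_j(e^{\mu_j}/|\tilde G(H_j)|)^{2t}$ does \emph{not} have curvature bounded away from $0$ at points where the higher contact functions $\varphi_s(\mathbf{a}_j)$ ($s\geq 1$) vanish; the estimate (\ref{equ-14}) that salvages strict negativity is stated precisely for the log-corrected quantities. Second, your numerator runs only over $k=0,\dots,n-2$ and so excludes the top Wronskian $\|F_{n-1}\|=|W|$; condition (H2) only controls $e^{\mu_j}$ up to order $\min(\nu_j,n-1)$ at a zero of $G(H_j)$ of order $\nu_j$, so when $\nu_j>n-1$ the singularity $|G(H_j)|^{-t}$ is not cancelled and $d\sigma^2$ does not extend continuously --- the Wronskian factor, via Lemma \ref{lem-13}, is what absorbs the excess vanishing. (The case where the universal cover is $\mathbb{C}$ rather than $\Delta$ also needs a sentence, handled in the paper by the classical defect relation.)
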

These findings contribute to our understanding of the behavior of the Gauss map for minimal surfaces in $\mathbb{R}^n$, shedding light on the intersection patterns with hyperplanes in general position.

\subsection{Derived curves}
Consider a holomorphic map $F: \Delta_R\rightarrow{\mathbb P}^k(\mathbb{C})$, where $\Delta_R:=\{z||z|<R\}\subset\mathbb{C}, 0<R\leq \infty$.
We ssume that $F$ is  linearly non-degenerate.
Let $f=(f_0,\cdots,f_k)$ be  a reduce representation of $F$.
Define
$f^{(s)}=(f^{(s)}_0,\cdots,f^{(s)}_k)$
and
\begin{eqnarray*}
\tilde{F}_s=f^{(0)}\wedge\cdots\wedge f^{(s)}:\Delta_R\rightarrow\bigwedge^{s+1}\mathbb{C}^{k+1},
\end{eqnarray*}
for each $s=0,\cdots,k$.
Obviously, $F_{k+1}\equiv0$.
Let ${\mathbb P}:\bigwedge^{s+1}\mathbb{C}^{k+1}\setminus\{0\}\rightarrow{\mathbb P}^{C_{k+1}^{s+1}-1}(\mathbb C)$ be the canonical  projection map,
and define $F_s={\mathbb P}(\tilde{F}_{s})$.
This map $F_s$ is referred to as the {\it $s$-th  derived curve of  $F$}.\par

\medskip
Let $\{e_0,\cdots,e_k\}$ be the standard basis of $\mathbb{C}^{k+1}$.
For $0\leq s\leq k$,  we can express
$$\tilde{F}_s = \sum_{0\leq i_0<\cdots<i_s\leq k} W(f_{i_0},\cdots,f_{i_s})  e_{i_0}\wedge \cdots \wedge e_{i_s},$$
where $W(f_{i_0},\cdots,f_{i_s})$ denotes the Wronskian of  $f_{i_0}, \cdots, f_{i_s}$.
Consequently, we have
$$\|\tilde{F}_s\|^2:=\sum_{0\leq i_0<\cdots<i_s\leq k}|W(f_{i_0},\cdots,f_{i_s})|^2.$$
Obviously, $\|\tilde{F}_s\|\not\equiv 0$ for
 $0\leq s\leq k$ under the assumption of linear non-degeneracy.\par

\medskip
For a hyperplane $H_j$ in ${\mathbb P}^k(\mathbb{C})$ with the unit  normal vector $\textbf{a}_j=(a_{j0}, \cdots, a_{jk})$, we define, for $0\leq s\leq k$,
\begin{equation}\label{derived-equ-10}
\|F_s(H_j)\|^2=\|(\tilde{F}_s,\textbf{a}_j)\|^2:=\sum_{0\leq i_1<\cdots<i_s\leq k}\left|\sum_{t\neq i_1, \cdots, i_s}a_{jt}W(f_t,f_{i_1},\cdots,f_{i_s})\right|^2.
\end{equation}
In particular,
$$\|F(H_j)\|=\|F_0(H_j)\|=|a_{j0}f_0+\cdots+a_{jk}f_k|,$$
and for any $H_j$,
$$\|F_k(H_j)\|=\|\tilde{F}_k\|=|W(f_{0},\cdots,f_{k})|.$$
If $F$ is linearly non-degenerate,
then  it can be shown that $\|F_s(H_j)\|\not \equiv 0$ for all $0\leq s\leq k$(see \cite{Chen-2021} for details).
We may assume that $\|\mathbf{a}_j\|=1$, and denote the distance between $F_s$ and $H_j$ by
$$\varphi_s(\mathbf{a}_j)=\frac{\|F_s(H_j)\|^2}{\|F_s\|^2}.$$

\subsection{Auxiliary results}
\begin{lemma}\cite{Chen-1990,Nochka-1983}\label{lem-3}
Let $\{H_j\}_{j=1}^q$ be a set of hyperplanes in ${\mathbb P}^k(\mathbb{C})$ in $n$-subgeneral position,
where $q>2n-k+1$.
Then there exist some constants $\varpi(j)$ and $\theta>0$ such that:
\begin{enumerate}
\item[$\bullet$] $0<\varpi(j)\theta\leq 1$ for all $1\leq j\leq q$,
\item[$\bullet$] $q-2n+k-1=\theta(\sum_{j=1}^q\varpi(j)-k-1)$,
\item[$\bullet$] $1\leq (n+1)/(k+1)\leq \theta\leq (2n-k+1)/(k+1)$,
\item[$\bullet$] if $B\subset\{1,\cdots,q\}$ and $\# B\leq n+1$, then $\displaystyle \sum_{j\in B}\varpi(j)\leq \dim span\{\textbf{a}_j\}_{j\in B}$
\end{enumerate}
\end{lemma}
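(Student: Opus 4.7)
The plan is to construct the weights $\varpi(j)$ and the scalar $\theta$ by a combinatorial induction on the hyperplane configuration, exploiting the matroid structure of the normals $\{\mathbf{a}_j\}$ and the hypothesis of $n$-subgeneral position. Throughout, I would write $\rho(B):=\dim\mathrm{span}\{\mathbf{a}_j\}_{j\in B}$ for $B\subseteq\{1,\dots,q\}$; the $n$-subgeneral position assumption forces $\rho(B)=k+1$ whenever $|B|=n+1$. The target is to produce weights $0<\varpi(j)\theta\leq1$ satisfying item 4, after which $\theta$ is pinned down by the linear identity in item 2.

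Next I would run the classical Nochka iterative procedure: identify a proper subset $B^{\ast}\subsetneq\{1,\dots,q\}$ that \emph{maximizes} the ratio $(|B|-\rho(B))/(q-|B|)$ among candidate ``dense'' subsets; assign to every $j\notin B^{\ast}$ the saturated weight $1/\theta$; then recurse on $B^{\ast}$ with the residual mass obtained after subtracting what has been absorbed outside. Each recursion strictly shrinks the ground set, so the algorithm terminates and produces weights on all of $\{1,\dots,q\}$. The parameter $\theta$ is then fixed by the normalization equation $q-2n+k-1=\theta(\sum_{j=1}^{q}\varpi(j)-k-1)$.

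Then I would verify the four items. Items 1 and 2 are immediate from the construction. For item 3, the bounds $(n+1)/(k+1)\leq\theta\leq(2n-k+1)/(k+1)$ are obtained by comparing $\sum_j\varpi(j)$ at the two extreme configurations (uniformly saturated versus minimally distributed) and substituting into the normalization identity of item 2, together with the elementary bound $1\leq(n+1)/(k+1)$ coming from $n\geq k$. Item 4, the submodular-type bound $\sum_{j\in B}\varpi(j)\leq\rho(B)$ for every $B$ with $|B|\leq n+1$, is the deep part of the lemma.

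The main obstacle will be the proof of item 4. The weights built by the recursion do not explicitly remember the detailed combinatorics of the original configuration, yet the inequality must hold simultaneously across an exponentially large family of test subsets. I would handle this by a double induction: an outer induction on the depth of the Nochka recursion, and at each step split a test subset $B$ as $(B\cap B^{\ast})\cup(B\setminus B^{\ast})$, controlling the first piece by the inductive hypothesis on $B^{\ast}$ and the second piece by the extremal property that defines $B^{\ast}$. A more conceptual alternative is to recast the existence of Nochka weights as a linear program over the polymatroid associated to $\rho$, constrained by $\sum_{j\in B}\varpi(j)\leq\rho(B)$ for $|B|\leq n+1$, and to invoke LP duality to produce the weights, the scalar $\theta$, and the extremal bounds of item 3 simultaneously. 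Either route, the crux is the careful book-keeping that extracts item 4 from the maximality of the ratio defining $B^{\ast}$ at each recursion step.
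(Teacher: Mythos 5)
First, note that the paper offers no proof of this lemma at all: it is quoted verbatim from the literature (Nochka's announcement and W.~Chen's detailed treatment, \cite{Chen-1990,Nochka-1983}), so there is no in-paper argument to compare yours against. Your sketch does correctly identify the shape of the known proof --- a recursive construction of extremal ``dense'' subsets, saturated weights outside them, and a final verification of the rank bound --- but as it stands it is an outline of a proof strategy rather than a proof, and the outline is thinnest exactly where the lemma is hard.

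Concretely, two gaps. The first is item~4: you acknowledge it is ``the deep part'' and then defer it to ``careful book-keeping'' in a double induction, but that book-keeping \emph{is} the theorem. Historically this is precisely the step where Nochka's original argument was considered incomplete; the complete verifications (Chen's thesis and 1990 paper, Fujimoto's monograph, and later Vojta's convexity-based exposition) each run to many pages, and the splitting $B=(B\cap B^{\ast})\cup(B\setminus B^{\ast})$ you propose does not obviously close, because the extremal property of $B^{\ast}$ controls averages of the rank function, not the rank of an arbitrary test set $B$ intersected with $B^{\ast}$. Second, the extremal quantity you propose to maximize, $(|B|-\rho(B))/(q-|B|)$, is not the one used in the standard construction: the Nochka chain $\emptyset=P_0\subsetneq P_1\subsetneq\cdots\subsetneq P_s$ is built by extremizing increments of the form $\bigl(\rho(B)-\rho(P_{i-1})\bigr)/\bigl(|B|-|P_{i-1}|\bigr)$ over $B\supsetneq P_{i-1}$ with $\rho(B)\leq k$ (restricted to subsets of bounded cardinality), and the weights are constant on the successive differences $P_i\setminus P_{i-1}$. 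With your ratio and a single-subset recursion it is not clear the resulting weights satisfy item~4 even in simple configurations, nor that the constant $\theta$ obtained a posteriori from item~2 lands in the interval of item~3 (the lower bound $\theta\geq (n+1)/(k+1)$ also needs an argument you do not indicate). The LP/polymatroid duality route you mention as an alternative is a legitimate modern way to organize the proof (it is close in spirit to Vojta's treatment), but it too is only named, not executed. Since the paper simply cites this result, the appropriate resolution is either to cite it as the paper does or to reproduce one of the complete published proofs; the present sketch does not substitute for either.
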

Here $\varpi(j)$ are called the Nochka weights associated to the hyperplanes $H_j(1\leq j\leq q)$.
It is clear that if  hyperplanes $H_1, \dots, H_q$ in ${\mathbb P}^n(\mathbb{C})$ are in general position, then, for $k\leq n$ and considering  ${\mathbb P}^k(\mathbb{C})\subset  {\mathbb P}^n(\mathbb{C})$,
the restricted hyperplanes $H_1\cap {\mathbb P}^k(\mathbb{C}), \dots, H_q\cap {\mathbb P}^k(\mathbb{C})$ are
in $n$-subgeneral position.
\begin{lemma}\cite{Chen-1990,Nochka-1983}\label{Nochka}
Consider the assumptions stated in  Lemma \ref{lem-3}.
Let $\{E_j\}_{j=1}^q$ be a sequence of real numbers with $E_j>1$ for all $j$.
For any subset $B\subset\{1,\cdots,q\}$ with  $0<\# B\leq n+1$,
there exists a subset $B_1\subset B$ such that $\{\textbf{a}_j\}_{j\in B_1}$ forms a basis for  the linear space spanned by $\{\textbf{a}_j\}_{j\in B}$
and
$$\prod_{j\in B}E_j^{\varpi(j)}\leq \prod_{j\in B_1}E_j.$$
\end{lemma}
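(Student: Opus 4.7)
The plan is to order the indices in $B$ by decreasing $E_j$, extract a basis $B_1$ greedily in that order, and then compare the two weighted sums via Abel summation (summation by parts), using only the fourth item of Lemma \ref{lem-3} as the analytic input.

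First I would reindex $B$ as $\{j_1,\ldots,j_m\}$ with $m=\#B$ so that $E_{j_1}\geq E_{j_2}\geq\cdots\geq E_{j_m}>1$, and set $x_s:=\log E_{j_s}$, giving $x_1\geq\cdots\geq x_m>0$. Next I would build $B_1$ greedily: place $j_s$ into $B_1$ precisely when $\mathbf{a}_{j_s}$ does not lie in $\mathrm{span}\{\mathbf{a}_{j_t}:t<s\}$. By construction $\{\mathbf{a}_j\}_{j\in B_1}$ is a basis of $\mathrm{span}\{\mathbf{a}_j\}_{j\in B}$, which fulfils the first requirement of the lemma.

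For the inequality, taking logarithms reduces it to $\sum_{s=1}^m\varpi(j_s)\,x_s\leq\sum_{j\in B_1}\log E_j$. Write $B(s):=\{j_1,\ldots,j_s\}$, $r_s:=\dim\mathrm{span}\{\mathbf{a}_j\}_{j\in B(s)}$ (so $r_0=0$ and the increment $r_s-r_{s-1}\in\{0,1\}$ equals $1$ exactly when $j_s\in B_1$), and $S_s:=\sum_{t=1}^s\varpi(j_t)$. Since $\#B(s)\leq\#B\leq n+1$, the fourth item of Lemma \ref{lem-3} gives $S_s\leq r_s$ for every $s$. Applying Abel summation twice then yields
\begin{align*}
\sum_{s=1}^m\varpi(j_s)\,x_s &= S_m\,x_m+\sum_{s=1}^{m-1}S_s\,(x_s-x_{s+1}) \\
&\leq r_m\,x_m+\sum_{s=1}^{m-1}r_s\,(x_s-x_{s+1}) \\
&= \sum_{s=1}^m(r_s-r_{s-1})\,x_s = \sum_{j\in B_1}\log E_j,
\end{align*}
where the middle inequality uses $x_m>0$ together with $x_s-x_{s+1}\geq 0$. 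Exponentiating delivers the desired bound $\prod_{j\in B}E_j^{\varpi(j)}\leq\prod_{j\in B_1}E_j$.

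I do not anticipate a serious obstacle: the argument is purely combinatorial, the only nontrivial ingredient is the subadditivity estimate $\sum_{j\in B(s)}\varpi(j)\leq r_s$ on the prefixes $B(s)$, which is already packaged into Lemma \ref{lem-3}, and the remainder is a standard summation-by-parts manipulation. The one point requiring care is to keep the two orderings compatible — sorting by decreasing $E_j$ so that $x_s-x_{s+1}\geq 0$, and building $B_1$ in that same order so that the indicator $r_s-r_{s-1}$ singles out exactly the basis indices and the Abel expansions of the two sides line up term by term.
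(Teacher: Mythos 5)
Your proof is correct. Note that the paper itself offers no proof of this lemma --- it is quoted verbatim from the cited sources (Nochka and W.~Chen; see also Fujimoto's monograph), so there is no in-paper argument to compare against. Your route is in fact the standard one: sort $B$ so that $E_{j_1}\geq\cdots\geq E_{j_m}>1$, extract $B_1$ greedily so that the rank increments $r_s-r_{s-1}$ are exactly the indicator of membership in $B_1$, apply the fourth property of Lemma~\ref{lem-3} to every prefix $B(s)$ to get $S_s\leq r_s$, and conclude by summation by parts using $x_s-x_{s+1}\geq0$ and $x_m>0$. All the individual steps check out ($S_0=r_0=0$ makes both Abel expansions exact, the $\mathbf{a}_j$ are nonzero so the greedy set is indeed a basis of the span), so the argument is complete as written.
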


\begin{lemma}\cite[Lemma 3.2.13]{Fujimoto-1993}\label{lem-13}
Let $F:\Delta_R\rightarrow \mathbb{P}^k(\mathbb{C})$ be a nondegenerate holomorphic  map in $\mathbb{P}^k(\mathbb{C})$
with a reduce representation $F=(f_0,\cdots,f_k)$.
Consider hyperplanes $H_1, \cdots, H_q$ in  $\mathbb{P}^k(\mathbb{C})$ in $n$-subgeneral position and
let $\varpi(1),\cdots,\varpi(q)$ be the Nochka weights
associated with these hyperplanes, where $q>2n-k+1$.
Define
$$D=\frac{|W(f_0,\cdots,f_k)|}{|F(H_1)|^{\varpi(1)}\cdots|F(H_q)|^{\varpi(q)}}.$$
Then
$$\nu_D+\sum_{j=1}^q\varpi(j)\min(\nu_{F(H_j)},k)\geq 0.$$
\end{lemma}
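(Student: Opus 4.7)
The inequality is local, so I would fix $z_0\in\Delta_R$ and aim to show
$$\nu_{W(f_0,\ldots,f_k)}(z_0)\;\geq\;\sum_{j=1}^{q}\varpi(j)\bigl(\nu_{F(H_j)}(z_0)-k\bigr)^{+},$$
which is equivalent to the stated conclusion because $\nu_D=\nu_{W(f_0,\ldots,f_k)}-\sum_j\varpi(j)\nu_{F(H_j)}$ and $\nu-\min(\nu,k)=(\nu-k)^{+}$. Only indices $j$ with $F(H_j)(z_0)=0$ contribute to the right-hand side. Because $F$ is a reduced representation, $F(z_0)$ is a well-defined point of $\mathbb P^{k}(\mathbb C)$; and because the $H_j$ are in $n$-subgeneral position, this point can lie on at most $n$ of them. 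Hence $B:=\{j:F(H_j)(z_0)=0\}$ has $\#B\leq n\leq n+1$, so $B$ is eligible for Lemma~\ref{Nochka}.

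The central analytic input will be the following elementary Wronskian bound: for any holomorphic $g_0,\ldots,g_k$ near $z_0$,
$$\nu_{W(g_0,\ldots,g_k)}(z_0)\;\geq\;\sum_{i=0}^{k}\bigl(\nu_{g_i}(z_0)-k\bigr)^{+}.$$
I would obtain this by factoring $(z-z_0)^{(\nu_{g_i}(z_0)-k)^{+}}$ out of the $i$-th column of the Wronskian determinant: when $\nu_{g_i}(z_0)\geq k$, the entry $g_i^{(l)}$ vanishes to order at least $\nu_{g_i}(z_0)-l\geq\nu_{g_i}(z_0)-k$ for each row $l=0,\ldots,k$, so the entire column is divisible by $(z-z_0)^{\nu_{g_i}(z_0)-k}$, and the residual determinant is holomorphic.

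Next I would set $B':=\{j\in B:\nu_{F(H_j)}(z_0)>k\}$ and apply Lemma~\ref{Nochka} to $B'$ with $E_j=\exp\bigl(\nu_{F(H_j)}(z_0)-k\bigr)>1$. This delivers a subset $B_1'\subseteq B'$ such that $\{\mathbf a_j\}_{j\in B_1'}$ is a basis of $\operatorname{span}\{\mathbf a_j\}_{j\in B'}$ and
$$\sum_{j\in B'}\varpi(j)\bigl(\nu_{F(H_j)}(z_0)-k\bigr)\;\leq\;\sum_{j\in B_1'}\bigl(\nu_{F(H_j)}(z_0)-k\bigr).$$
I would then extend $\{\mathbf a_j\}_{j\in B_1'}$ to a basis of $\mathbb C^{k+1}$ by adjoining standard basis vectors $\mathbf b_1,\ldots,\mathbf b_{k+1-\#B_1'}$, form the $k+1$ functions $g_j=F\cdot\mathbf a_j$ (for $j\in B_1'$) and $g_\ell=F\cdot\mathbf b_\ell$ (for the remaining slots), and note that $W(g_1,\ldots,g_{k+1})$ equals a nonzero constant (the determinant of the change of basis) times $W(f_0,\ldots,f_k)$, so the two Wronskians have the same vanishing order at $z_0$.

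Combining the two pieces completes the argument: the column-factoring bound applied to $(g_1,\ldots,g_{k+1})$ yields $\nu_{W(f_0,\ldots,f_k)}(z_0)\geq\sum_{j\in B_1'}(\nu_{F(H_j)}(z_0)-k)$ (the contributions from the $\mathbf b_\ell$-slots are nonnegative and can be dropped), and then Nochka's inequality promotes this to $\sum_{j\in B}\varpi(j)(\nu_{F(H_j)}(z_0)-k)^{+}$, since indices in $B\setminus B'$ contribute zero. The main obstacle I anticipate is in Step~3, namely coordinating the basis-extension construction with the specific $B_1'$ selected by Nochka's lemma and checking that the restriction $B\rightsquigarrow B'$ (forced by the requirement $E_j>1$) is lossless; once that matching is in place, the three ingredients fit together mechanically.
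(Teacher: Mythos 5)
Your argument is correct and is essentially the standard proof of this lemma (the paper itself gives no proof, citing Fujimoto's book, and your route --- bounding $\nu_W$ column-by-column after using Lemma~\ref{Nochka} to replace the weighted sum over $B'=\{j:\nu_{F(H_j)}(z_0)>k\}$ by an unweighted sum over a linearly independent subset $B_1'$, then completing to a basis so that the Wronskian changes only by a nonzero constant --- is exactly the argument found there). The only point worth making explicit is the trivial case $B'=\emptyset$, where the right-hand side vanishes and $\nu_W\geq 0$ suffices; otherwise the matching of $B_1'$ with the basis extension works exactly as you describe.
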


\begin{lemma}\cite{Ahlfors-1973}\label{Schwarz}
Let $\chi=\frac{i}{2\pi}\Omega(z)dz\wedge d\bar{z}$ be a continuous pseudo-metric on $\Delta_R$
with  curvature  bounded above by a negative constant.
Then there exists a positive constant $C$ such that
$$\Omega(z)\leq C\cdot\left(\frac{2R}{R^2-|z|^2}\right)^2.$$
\end{lemma}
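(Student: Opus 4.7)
The plan is to carry out Ahlfors's classical comparison argument with the Poincar\'e metric on the disk $\Delta_R$. Recall that the density $\sigma(z)=(2R/(R^2-|z|^2))^2$ that appears on the right-hand side is precisely the Poincar\'e density of $\Delta_R$, whose Gauss curvature (in the sense of Definition \ref{def-3}) is identically $-1$. The strategy is to compare $\Omega$ with this model metric via a maximum principle, exploiting the fact that the Poincar\'e density blows up at the boundary while $\Omega$ stays bounded on compact subsets.

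First I would restate the curvature hypothesis analytically: if the curvature of $\chi$ is $\le -B$ with $B>0$, then on the open set $\{\Omega>0\}$ one has $\Delta\log\Omega \ge B\,\Omega$. Next, to avoid the boundary blow-up of $\sigma$, I would use the standard exhaustion: for each $r\in(0,R)$ set $\sigma_r(z)=(2r/(r^2-|z|^2))^2$, still of curvature $-1$, and consider the ratio $v_r(z)=\Omega(z)/\sigma_r(z)$ on $\Delta_r$. Since $\sigma_r\to\infty$ on $\partial\Delta_r$ while $\Omega$ is bounded on $\overline{\Delta_r}$ by continuity, $v_r$ attains an interior maximum at some point $z_0\in\Delta_r$. (If that maximum value is $0$ the estimate on $\Delta_r$ is trivial, so we may assume $\Omega(z_0)>0$.)

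At such a maximum the Laplacian satisfies $\Delta \log v_r(z_0)\le 0$, which rearranges to
\begin{equation*}
B\,\Omega(z_0)\ \le\ \Delta\log\Omega(z_0)\ \le\ \Delta\log\sigma_r(z_0)\ =\ \sigma_r(z_0),
\end{equation*}
using the curvature identity for the Poincar\'e density in the last step. This gives $v_r(z_0)\le 1/B$, and by the choice of $z_0$, $\Omega(z)\le \sigma_r(z)/B$ for every $z\in\Delta_r$. Letting $r\nearrow R$ yields $\sigma_r(z)\to\sigma(z)$ pointwise on $\Delta_R$, so $\Omega(z)\le\sigma(z)/B$ throughout $\Delta_R$, and the constant $C=1/B$ depends only on the negative curvature bound.

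The main technical nuisance I would expect is justifying the maximum-principle step in the presence of possible zeros of $\Omega$, where $\log\Omega=-\infty$ is singular and the differential inequality $\Delta\log\Omega\ge B\,\Omega$ must be read in an appropriate generalized (upper semicontinuous) sense. The standard workaround is either to regularize by replacing $\Omega$ with $\Omega+\varepsilon$ (or to work with $\log(\Omega+\varepsilon\sigma_r)$), carry out the maximum argument, and then let $\varepsilon\downarrow 0$; or, equivalently, to note that the interior maximum of $v_r$ can only occur at a point where $\Omega>0$ unless $v_r\equiv 0$, so the classical smooth computation applies at the relevant $z_0$. Either route turns the heuristic comparison into a rigorous bound without affecting the final constant.
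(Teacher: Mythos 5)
The paper gives no proof of this lemma---it is quoted directly from Ahlfors's \emph{Conformal Invariants}---and your argument is exactly the classical Ahlfors--Schwarz comparison with the Poincar\'e density of the exhausting disks $\Delta_r$ via the maximum principle that the citation refers to. The proof is correct: the factor-of-two discrepancies (e.g.\ $\Delta\log\Omega\ge B\,\Omega$ versus $2B\,\Omega$, and $\Delta\log\sigma_r=\sigma_r$ versus $2\sigma_r$) are purely a matter of the normalization of $\Delta$ and of curvature and are absorbed into the constant $C$, and your observation that a nonzero interior maximum of $v_r$ can only occur where $\Omega>0$ (together with a regularization when the curvature bound only holds in a generalized sense at the zeros of $\Omega$) is the standard and adequate way to make the maximum-principle step rigorous.
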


In this section, we  construct a pseudo-metric on $\Delta_R$
which plays a key role in proving Theorem \ref{mainthm-1}.
\begin{lemma}\label{lem-11}
Let $F:\Delta_R\rightarrow \mathbb{P}^k(\mathbb{C})$ be a nondegenerate holomorphic  map with a reduce representation
$(f_0,f_1,\cdots,f_k)$.
Let $H_1,\cdots,H_q$ be hyperplanes in  $\mathbb{P}^k(\mathbb{C})$ in $n$-subgeneral position
and $\varpi(j)$  be  their Nochka weights.
Assume that there exist positive constants $\eta_j(1\leq j\leq q)$
and $[-\infty,\infty)-$valued continuous subharmonic functions $\mu_j$ satisfying conditions
$(H1)$ and $(H2)$.
Let $N$ be a positive constant, and define
{\small{
\begin{equation}\label{xi}
\displaystyle\xi=\frac{\|F\|^{\sum_{j=1}^q\varpi(j)(1-\eta_j)}}{\|F\|^{(k+1)+\frac{2q}{N}(k^2+2k+1)}}\cdot
\prod_{j=1}^q\left(\frac{e^{\mu_j}}{|F(H_j)|}\right)^{\varpi(j)}
\cdot\frac{\|\tilde{F}_k\|^{1+\frac{2q}{N}}\cdot\prod_{s=0}^{k-1}\|\tilde{F}_s\|
^{\frac{4q}{N}}}{\prod_{s=0}^{k-1}\prod_{j=1}^q(N-\log\varphi_s(\mathbf{a}_j))}
\end{equation}
}}
on $\Delta_R^\ast: =\Delta_R\setminus\{\cup_{1\leq j\leq q, 0\leq s\leq k}\varphi_s(\mathbf{a}_j)=0\}$.
If $\displaystyle\sum_{j=1}^q(1-\eta_j)>2n-k+1$,
then there exists a positive constant $d_k$ such that for large $N$,
$$dd^c\log \xi\geq \frac{1}{2\pi}d_k\xi^{2\kappa}dz\wedge d\bar{z},$$
where $\kappa=\frac{1}{\sum_{s=0}^{k-1}[(k-s)+\frac{2q}{N}(k-s)^2]}$.
\end{lemma}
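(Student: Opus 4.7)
The plan is to verify the differential inequality by computing $dd^c\log\xi$ term by term, harnessing the positive currents produced by the Plücker-type curvature identities, and then matching the result with $\xi^{2\kappa}$ via Nochka's weighted lemma and the AM--GM inequality. I decompose $\log\xi$ into five pieces corresponding to the factor of $\log\|F\|$, the weighted sum $\sum_j\varpi(j)(\mu_j-\log|F(H_j)|)$, the top Wronskian $\log\|\tilde F_k\|$, the lower Wronskians $\sum_{s=0}^{k-1}\log\|\tilde F_s\|$, and the correction terms $\log(N-\log\varphi_s(\mathbf{a}_j))$. Applying $dd^c$ to each, three ingredients are used: First, the standard Plücker identity
$$dd^c\log\|\tilde F_s\|^2 = \frac{\|\tilde F_{s-1}\|^2\,\|\tilde F_{s+1}\|^2}{\|\tilde F_s\|^4}\cdot\frac{i}{2\pi}\,dz\wedge d\bar z$$
(with the convention $\|\tilde F_{-1}\|\equiv 1$) produces nonnegative currents on $\Delta_R^\ast$. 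Second, the $\mu_j$ pieces are nonnegative by subharmonicity, and the divisor currents arising from $-\varpi(j)\log|F(H_j)|$ are cancelled at the zeros of $F(H_j)$ by hypothesis $(H2)$. Third, direct computation gives
$$-dd^c\log\bigl(N-\log\varphi_s(\mathbf{a}_j)\bigr) = \frac{dd^c\log\varphi_s(\mathbf{a}_j)}{N-\log\varphi_s(\mathbf{a}_j)} + \frac{d\log\varphi_s(\mathbf{a}_j)\wedge d^c\log\varphi_s(\mathbf{a}_j)}{(N-\log\varphi_s(\mathbf{a}_j))^2},$$
whose second summand is nonnegative and whose first summand, being divided by $N-\log\varphi_s(\mathbf{a}_j)$, is uniformly negligible for $N$ large compared with the positivity already accumulated from $dd^c\log\|\tilde F_s\|$. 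The assumption $\sum_j(1-\eta_j)>2n-k+1$, combined with the third bullet of Lemma \ref{lem-3}, guarantees $\sum_j\varpi(j)(1-\eta_j)-(k+1)>0$, providing the slack required for the net exponent on $\|F\|$ to have the correct sign.

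Summing the contributions yields a pointwise lower bound of the schematic form
$$dd^c\log\xi \;\geq\; C\cdot\|F\|^{\gamma}\,e^{\sum_j\varpi(j)\mu_j}\prod_{j=1}^q|F(H_j)|^{-\varpi(j)}\prod_{s=0}^{k-1}\Bigl(\tfrac{\|\tilde F_{s-1}\|\,\|\tilde F_{s+1}\|}{\|\tilde F_s\|^2}\Bigr)^{\alpha_s}\cdot\frac{i}{2\pi}\,dz\wedge d\bar z,$$
for explicit exponents $\gamma$ and $\alpha_s$. To identify this right-hand side with a multiple of $\xi^{2\kappa}$, I apply Nochka's weighted lemma (Lemma \ref{Nochka}) pointwise to select a basis subset $B_1\subset\{1,\dots,q\}$ of dual vectors, reducing $\prod_j|F(H_j)|^{-\varpi(j)}$ to a product over $B_1$; the Plücker identity $W(f_0,\dots,f_k)=\det(\mathbf{a}_{j_0},\dots,\mathbf{a}_{j_k})^{-1}\cdot W(F(H_{j_0}),\dots,F(H_{j_k}))$ on that basis then converts $\prod_{j\in B_1}|F(H_j)|^{-1}$ into $\|\tilde F_k\|^{-1}$ up to $\|F\|$-factors that are already present in $\xi$. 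A final AM--GM step distributes the Plücker curvature product evenly across $s=0,\dots,k-1$, aligning the exponents with those of $\|\tilde F_s\|$ in $\xi$ and pulling out the power $2\kappa = 2/\sum_{s=0}^{k-1}[(k-s)+(2q/N)(k-s)^2]$.

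The main obstacle will be the exponent bookkeeping in this last step: one must verify that, after Nochka's weighted substitution and the AM--GM distribution, the resulting exponents of $\|F\|$, of each $\|\tilde F_s\|$, of $e^{\mu_j}$ and of $|F(H_j)|$ all agree with $2\kappa$ times the corresponding exponents appearing in $\xi$. The specific coefficients $(1+2q/N)$ and $4q/N$ entering $\xi$ are engineered precisely so this balancing is possible, and the hypothesis that $N$ be large is exactly what absorbs the $O(1/N)$ errors produced by the correction term $\log(N-\log\varphi_s(\mathbf{a}_j))$ into the uniform constant $d_k$, which ultimately depends only on $k$.
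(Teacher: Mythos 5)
Your overall outline---term-by-term computation of $dd^c\log\xi$, vanishing of the harmonic and holomorphic contributions on $\Delta_R^\ast$, the Pl\"ucker identity $dd^c\log\|\tilde F_s\|^2=\frac{i}{2\pi}\frac{\|\tilde F_{s-1}\|^2\|\tilde F_{s+1}\|^2}{\|\tilde F_s\|^4}dz\wedge d\bar z$, positivity of the net $\|F\|$-exponent via Lemma \ref{lem-3}, and a final weighted AM--GM---does match the paper. But there is a genuine gap in your treatment of the correction terms, and it propagates into the final identification with $\xi^{2\kappa}$. You split $-dd^c\log(N-\log\varphi_s(\mathbf{a}_j))$ into a gradient-square term plus a term that is $O(1/N)$, and then discard the gradient term as merely ``nonnegative.'' That term is not a nuisance to be thrown away: it is the sole source of the singular factors on the right-hand side of the target inequality. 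The paper (quoting \cite[Lemma 2.1]{RU-1991}, displayed as (\ref{equ-13})) bounds it below \emph{quantitatively} by $\frac{\varphi_{s+1}(\mathbf{a}_j)}{\varphi_s(\mathbf{a}_j)(N-\log\varphi_s(\mathbf{a}_j))^2}\,dd^c\log\|\tilde F_s\|^2$, and the Nochka-weighted sum-to-product estimate (\ref{equ-14}) then converts the sum over $j$ of these into $C_s\bigl(\prod_j(\varphi_{s+1}/\varphi_s)^{\varpi(j)}(N-\log\varphi_s)^{-2}\bigr)^{\theta_s}$. It is exactly this product, telescoped over $s$, that generates $\prod_j\varphi_0(\mathbf{a}_j)^{-\varpi(j)}=\|F\|^{2\sum_j\varpi(j)}\prod_j|F(H_j)|^{-2\varpi(j)}$ together with the factors $(N-\log\varphi_s(\mathbf{a}_j))^{-2}$ appearing in $\xi^{2\kappa}$; the $e^{\mu_j}$ factors are only absorbed at the very last line via (H1). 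Once the gradient term is dropped, nothing in your accounting can produce $\prod_j|F(H_j)|^{-2\kappa\varpi(j)}$ or $\prod_{s,j}(N-\log\varphi_s(\mathbf{a}_j))^{-2\kappa}$ on the right: the Pl\"ucker curvature factors involve only the $\|\tilde F_s\|$ and do not see the hyperplanes, while $dd^c\mu_j$ and $dd^c\log|F(H_j)|$ vanish on $\Delta_R^\ast$ rather than contributing multiplicative factors. Your schematic intermediate bound therefore already presupposes the output of the sum-to-product step.

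The substitute you propose---applying Lemma \ref{Nochka} pointwise to select a basis $B_1$ and invoking the determinant identity relating $W(f_0,\dots,f_k)$ to $W(F(H_{j_0}),\dots,F(H_{j_k}))$---is indeed an ingredient in the proofs of the sum-to-product lemma and of Lemma \ref{lem-13}, but as deployed here it acts on the wrong object: it compares $\prod_j|F(H_j)|^{\varpi(j)}$ with $\|\tilde F_k\|$ and produces neither the $(N-\log\varphi_s)^{-2}$ factors nor the intermediate ratios $\varphi_{s+1}/\varphi_s$ whose telescoping is what makes the exponents close. So the ``exponent bookkeeping'' you flag as the main obstacle is not bookkeeping: with the gradient term discarded, the two sides cannot be matched for any choice of exponents, since the right-hand side is unbounded at points where $F$ approaches some $H_j$ while your retained left-hand terms are not. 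To repair the argument, reinstate the quantitative lower bound (\ref{equ-13}) and the estimate (\ref{equ-14}) before the AM--GM step, as in the paper's proof.
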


\begin{proof}
By the setting of $\xi$ in (\ref{xi}),
it follows that
{\small{
\begin{align*}
dd^c\log \xi&=\{\sum_{j=1}^q\varpi(j)(1-\eta_j)-(k+1)-\frac{2q}{N}(k^2+2k+1)\}dd^c\log\|F\|+\sum_{j=1}^q\varpi(j)dd^c\mu_j\\
&-\sum_{j=1}^q\varpi(j)dd^c\log|F(H_j)|+(1+\frac{2q}{N})dd^c\log\|\tilde{F}_k\|+\frac{4q}{N}\sum_{s=0}^{k-1}dd^c\log\|\tilde{F}_s\|\\
&+\sum_{s=0}^{k-1}\sum_{j=1}^qdd^c\log\frac{1}{N-\log\varphi_s(\mathbf{a}_j)}.
\end{align*}
}}
Moreover,
under the given conditions,
each $\mu_j$  is harmonic function on $\Delta_R^\ast$ and
$\tilde{F}_k$ and $F(H_j)$ are  holomorphic functions.
For all $j$,
$$dd^c\mu_j=0, ~dd^c\log\|\tilde{F}_k\|=0, ~dd^c\log|F(H_j)|=0.$$
 hold on $\Delta_R^\ast$.
Consequently,
\begin{align}
dd^c\log \xi&=\{\sum_{j=1}^q\varpi(j)(1-\eta_j)-(k+1)-\frac{2q}{N}(k^2+2k+1)\}dd^c\log\|F\|\nonumber\\
&+\frac{4q}{N}\sum_{s=0}^{k-1}dd^c\log\|\tilde{F}_s\|
+\sum_{s=0}^{k-1}\sum_{j=1}^qdd^c\log\frac{1}{N-\log\varphi_s(\mathbf{a}_j)}.\label{equ-12}
\end{align}
Referring to \cite[Lemma 2.1]{RU-1991}(see also \cite{Cowen-1976, Shabat-1985, Wong-1976}),
we have
\begin{equation}\label{equ-13}
dd^c\log\frac{1}{N-\log\varphi_s(\mathbf{a}_j)}\geq \left\{\frac{\varphi_{s+1}(\mathbf{a}_j)}{\varphi_s(\mathbf{a}_j)(N-\log\varphi_s(\mathbf{a}_j))^2}-\frac{1}{N}\right\}dd^c\log\|\tilde{F}_s\|^2.
\end{equation}
Additionally, utilizing  \cite[Lemma 2.3]{RU-1991} or \cite[Theorem 7.3]{Chen-1987},
we can find a positive constant
 $C_s$ such that
\begin{equation}\label{equ-14}
\sum_{j=1}^q\frac{\varphi_{s+1}(\mathbf{a}_j)}{\varphi_s(\mathbf{a}_j)(N-\log\phi_s(\mathbf{a}_j))^2}\geq
C_s\left(\prod_{j=1}^q\left(\frac{\varphi_{s+1}(\mathbf{a}_j)}{\varphi_{s}(\mathbf{a}_j)}\right)^{\varpi(j)}\frac{1}{(N-\log\varphi_{s}(\mathbf{a}_j))^2}\right)
^{\theta_s}
\end{equation}
holds, where $\theta_s=\frac{1}{k-s+\frac{2q}{N}(k-s)^2}$.
Combining (\ref{equ-12}), (\ref{equ-13}) and (\ref{equ-14}),
we obtain
\begin{align}
dd^c\log \xi&\geq\{\sum_{j=1}^q\varpi(j)(1-\eta_j)-(k+1)-\frac{2q}{N}(k^2+2k)\}dd^c\log\|F\|\nonumber\\
&+\frac{2q}{N}\sum_{s=1}^{k-1}dd^c\log\|\tilde{F}_s\|+\sum_{s=0}^{k-1}Y_s.\label{equ-15}
\end{align}
Here, for $0\leq s\leq k-1$,
\begin{align}\label{equ-16}
Y_s=C_s\left(\prod_{j=1}^q\left(\frac{\varphi_{s+1}(\mathbf{a}_j)}{\varphi_{s}(\mathbf{a}_j)}\right)^{\varpi(j)}\frac{1}{(N-\log\varphi_{s}(\mathbf{a}_j))^2}\right)
^{\theta_s}dd^c\log\|\tilde{F}_s\|^2.
\end{align}
We claim that
\begin{align}\label{equ-18}
dd^c\log \xi\geq\sum_{s=0}^{k-1}Y_s=\frac{i}{2\pi}\sum_{s=0}^{k-1}y_sdz\wedge d\bar{z}
\end{align}
holds for some large $N$.
By Lemma \ref{lem-3}, we obtain
\begin{align}
\sum_{j=1}^q\varpi(j)(1-\eta_j)-(k+1)&=\sum_{j=1}^q\varpi(j)-(k+1)-\sum_{j=1}^q\varpi(j)\eta_j\nonumber\\
&\geq (q-2n+k-1-\sum_{j=1}^q\eta_j)\cdot\frac{k+1}{2n-k+1}.\label{equ-xi-1}
\end{align}
The assumption  $\sum_{j=1}^q(1-\eta_j)>2n-k+1$ implies that the right-hand side of the above inequality is positive,
namely, $\sum_{j=1}^q\varpi(j)(1-\eta_j)-(k+1)>0$.
Therefore, we can choose a sufficiently large positive $N$ such that
\begin{equation}\label{equ-19}
\sum_{j=1}^q\varpi(j)(1-\eta_j)-(k+1)-\frac{2q}{N}(k^2+2k)>0.
\end{equation}
On the other hand, it follows from \cite[Lemma 4.16]{Cowen-1976}(see also \cite{Shabat-1985}) that
\begin{equation}\label{equ-17}
dd^c\log\|\tilde{F}_s\|^2=\frac{i}{2\pi}\frac{\|\tilde{F}_{s-1}\|^2\|\tilde{F}_{s+1}\|^2}{\|\tilde{F}_{s}\|^4}dz\wedge d\bar{z},
\end{equation}
where $0\leq s\leq k-1$ and $\tilde{F}_{-1}\equiv1$.
One further knows each $dd^c\log\|\tilde{F}_s\|^2$ is nonnegative.
Hence,  (\ref{equ-18}) holds.\par

\medskip
For some positive numbers $x_0, \cdots, x_{k-1}$ and $a_0, \cdots, a_{k-1}$,
we apply the elementary inequality
$$\sum_{s=0}^{k-1}a_sx_s\geq \left(\sum_{s=0}^{k-1}a_s\right)\cdot\left(\prod_{s=0}^{k-1}x_s^{a_s}\right)^{\frac{1}{\sum_{s=0}^{k-1}a_s}}.$$
By setting $x_s=\theta_sy_s, a_s=\frac{1}{\theta_s}$,
then from (\ref{equ-16}) and (\ref{equ-17}) there exists a positive constant $d_k$(depending on $c_0,\cdots,c_{k-1},\theta_0,\cdots,\theta_{k-1}$) such that
\begin{align*}
\sum_{s=0}^{k-1}y_s&\geq \left(\sum_{s=0}^{k-1}\frac{1}{\theta_s}\right)
\cdot\left(\prod_{s=0}^{k-1}\theta_s^{\frac{1}{\theta_s}}\right)^{\frac{1}{\sum_{s=0}^{k-1}\frac{1}{\theta_s}}}
\cdot\left(\prod_{s=0}^{k-1}y_s^{\frac{1}{\theta_s}}\right)^{\frac{1}{\sum_{s=0}^{k-1}\frac{1}{\theta_s}}}\\
&=d_k\left(\prod_{j=1}^q\frac{1}{\varphi_0(\mathbf{a}_j)^{\varpi(j)}}\cdot\prod_{s=0}^{k-1}\prod_{j=1}^q\frac{1}{(N-\log\varphi_s(\mathbf{a}_j))^2}
\right)^{\frac{1}{\sum_{s=0}^{k-1}\frac{1}{\theta_s}}}\\
&\cdot\left(\prod_{s=0}^{k-1}\left(\frac{\|\tilde{F}_{s-1}\|^2\|\tilde{F}_{s+1}\|^2}{\|\tilde{F}_{s}\|^4}\right)
^{\frac{1}{\theta_s}}\right)^{\frac{1}{\sum_{s=0}^{k-1}\frac{1}{\theta_s}}}.
\end{align*}
Substituting $\theta_s=\frac{1}{k-s+\frac{2q}{N}(k-s)^2}$ into the last term of above inequality,
we obtain
\begin{align*}
&\prod_{s=0}^{k-1}\left(\frac{\|\tilde{F}_{s-1}\|^2\|\tilde{F}_{s+1}\|^2}{\|\tilde{F}_{s}\|^4}\right)
^{\frac{1}{\theta_s}}\\
&=\frac{\prod_{s=0}^{k-1}\|\tilde{F}_{s-1}\|^{2[(k-s)+\frac{2q}{N}(k-s)^2]}\cdot\prod_{s=0}^{k-1}\|\tilde{F}_{s+1}\|^{2[(k-s)+\frac{2q}{N}(k-s)^2]}}
{\prod_{s=0}^{k-1}\|\tilde{F}_s\|^{4[(k-s)+\frac{2q}{N}(k-s)^2]}}\\
&=\frac{\prod_{s=0}^{k-2}\|\tilde{F}_{s}\|^{2[(k-s-1)+\frac{2q}{N}(k-s-1)^2]}\cdot\prod_{s=1}^{k}\|\tilde{F}_{s}\|^{2[(k-s+1)+\frac{2q}{N}(k-s+1)^2]}}
{\prod_{s=0}^{k-1}\|\tilde{F}_s\|^{4[(k-s)+\frac{2q}{N}(k-s)^2]}}\\
&=\|\tilde{F}_0\|^{-2[k+1+\frac{2q}{N}(k^2+2k-1)]}\cdot\prod_{s=1}^{k-1}\|\tilde{F}_s\|^{\frac{8q}{N}}\cdot|\tilde{F}_k|^{2(1+\frac{2q}{N})}.
\end{align*}
Consequently,
\begin{align*}
\sum_{s=0}^{k-1}y_s
&\geq d_k\left(\frac{\|F\|^{\Sigma_{j=1}^q\varpi(j)}}{\prod_{j=1}^q|F(H_j)|^{\varpi(j)}}
\cdot\frac{1}{\prod_{s=0}^{k-1}\prod_{j=1}^q(N-\log\varphi_s(\mathbf{a}_j))}
\right)^{\frac{2}{\sum_{s=0}^{k-1}\frac{1}{\theta_s}}}\\
&\cdot\left(\|\tilde{F}_0\|^{-[k+1+\frac{2q}{N}(k^2+2k-1)]}\cdot\prod_{s=1}^{k-1}\|\tilde{F}_s\|^{\frac{4q}{N}}\cdot|\tilde{F}_k|^{1+\frac{2q}{N}}\right)
^{\frac{2}{\sum_{s=0}^{k-1}\frac{1}{\theta_s}}}\\
&= d_k(\xi\cdot \prod_{j=1}^q\left(\|F\|^{\eta_j}e^{-\mu_j}\right)^{\varpi(j)})^{\frac{2}{\sum_{s=0}^{k-1}\frac{1}{\theta_s}}}\\
&\geq  d_k\xi^{\frac{2}{\sum_{s=0}^{k-1}\frac{1}{\theta_s}}},
\end{align*}
which ensures that
$$dd^c\log \xi\geq \frac{1}{2\pi}d_k\xi^{2\kappa}dz\wedge d\bar{z}.$$
Here, $\kappa=\frac{1}{\sum_{s=0}^{k-1}[(k-s)+\frac{2q}{N}(k-s)^2]}$.

\end{proof}

\begin{lemma}\label{lem-12}
Under the above assumptions in Lemma \ref{lem-11}.
Let us define
\begin{equation}
\Omega(z)=
\left\{
\begin{array}{ccc}
\xi(z)^{2/\sum_{s=0}^ks(1+\frac{2q}{N}s)},& z\in\Delta_R\setminus\{\cup_{1\leq j\leq q, 0\leq s\leq k}\varphi_s(\mathbf{a}_j)=0\},\\
0,& z\in\{\cup_{1\leq j\leq q, 0\leq s\leq k}\varphi_s(\mathbf{a}_j)=0\}.
\end{array}
\right.
\end{equation}
Then  there exists a positive constant $C$ such
that
$$\Omega(z)\leq C\cdot\left(\frac{2R}{R^2-|z|^2}\right)^2.$$
\end{lemma}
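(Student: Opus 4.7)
The plan is to verify that $\Omega$ defines a continuous pseudo-metric on $\Delta_R$ whose curvature is bounded above by a negative constant, and then invoke the generalized Ahlfors-Schwarz lemma (Lemma \ref{Schwarz}).

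First, I would observe that the exponent appearing in the definition of $\Omega$ coincides with $2\kappa$ from Lemma \ref{lem-11}: indeed, via the substitution $t = k - s$,
\[
\sum_{s=0}^{k-1}\Bigl[(k-s) + \tfrac{2q}{N}(k-s)^2\Bigr] = \sum_{t=1}^{k}\Bigl[t + \tfrac{2q}{N}t^2\Bigr] = \sum_{s=0}^{k} s\Bigl(1 + \tfrac{2q}{N}s\Bigr),
\]
so $\Omega = \xi^{2\kappa}$ on $\Delta_R^\ast$. Next, I would check that $\Omega$ extends continuously (as $0$) across the zero set $\{\cup_{j,s}\varphi_s(\mathbf{a}_j)=0\}$. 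At a point $z_0$ where $F(H_j) \neq 0$ for every $j$ but $\varphi_s(\mathbf{a}_j) \to 0$ for some pair $(j,s)$, the factors $e^{\mu_j}/|F(H_j)|$ are locally bounded while the denominator $\prod_{s,j}(N-\log\varphi_s(\mathbf{a}_j))$ diverges, forcing $\xi \to 0$. At a point $z_0$ where $F(H_j)(z_0) = 0$ with order $n_0 := \nu_{F(H_j)}(z_0)$, hypothesis $(H2)$ gives $e^{\mu_j} \asymp |z-z_0|^{\min\{n_0,k\}}$ up to a bounded factor, while Lemma \ref{lem-13} applied to $\|\tilde{F}_k\| = |W(f_0,\dots,f_k)|$ guarantees that $\nu_{|W|}(z_0) \geq \sum_j \varpi(j)\max\{0, n_0 - k\}$. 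Consequently the product
\[
\|\tilde{F}_k\|^{1+\frac{2q}{N}}\cdot\prod_{j=1}^q\Bigl(\tfrac{e^{\mu_j}}{|F(H_j)|}\Bigr)^{\varpi(j)}
\]
vanishes to order at least $\tfrac{2q}{N}\nu_{|W|}(z_0)$ at $z_0$, which together with the denominator terms still forces $\xi^{2\kappa} \to 0$.

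Then I would combine Lemma \ref{lem-11}, which yields
\[
dd^c \log \xi \geq \tfrac{1}{2\pi}\, d_k\, \xi^{2\kappa}\, dz \wedge d\bar{z} = \tfrac{1}{2\pi}\, d_k\, \Omega\, dz \wedge d\bar{z},
\]
with the identity $\log \Omega = 2\kappa\, \log \xi$ to obtain
\[
dd^c \log \Omega \;\geq\; \tfrac{\kappa d_k}{\pi}\, \Omega\, dz \wedge d\bar{z}
\;=\; 2\kappa d_k \cdot \tfrac{i}{2\pi}\Omega\, dz \wedge d\bar{z}.
\]
Interpreting $\chi := \tfrac{i}{2\pi}\,\Omega(z)\, dz \wedge d\bar{z}$ as a continuous pseudo-metric on $\Delta_R$, this inequality means precisely that the Gauss curvature of $\chi$ is bounded above by the negative constant $-2\kappa d_k$. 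Applying Lemma \ref{Schwarz} then produces a constant $C > 0$ depending only on $\kappa$ and $d_k$ such that
\[
\Omega(z) \leq C\cdot\Bigl(\tfrac{2R}{R^2-|z|^2}\Bigr)^2,
\]
which is the desired bound.

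The technical heart of the argument is the extension-by-zero step: one must simultaneously reconcile the poles of $|F(H_j)|^{-\varpi(j)}$, the zeros dictated by $(H2)$ on $\mu_j$, and the Wronskian zeros of $\|\tilde{F}_k\|$ at ramification points, and verify that the net vanishing order is non-negative, with the margin provided by the $\tfrac{2q}{N}$ correction terms. Once this bookkeeping is handled, the rest is the direct concatenation of Lemma \ref{lem-11} and the Ahlfors-Schwarz lemma.
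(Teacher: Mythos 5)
Your proposal is correct and follows essentially the same route as the paper: extend $\Omega$ continuously by zero across the bad set (using $(H2)$ together with Lemma \ref{lem-13} to control the net vanishing order of $e^{\sum_j\mu_j\varpi(j)}|\tilde{F}_k|/\prod_j|F(H_j)|^{\varpi(j)}$, with the divergent $N-\log\varphi_s(\mathbf{a}_j)$ factors forcing the limit to be zero), then combine Lemma \ref{lem-11} with the identity $\Omega=\xi^{2\kappa}$ to see that $\chi=\frac{i}{2\pi}\Omega\,dz\wedge d\bar z$ has curvature bounded above by a negative constant, and finish with the Ahlfors--Schwarz lemma. Your explicit verification that the exponent in the definition of $\Omega$ equals $2\kappa$, and your separate treatment of the points where $\varphi_s(\mathbf{a}_j)=0$ for $s\geq 1$ but $F(H_j)\neq 0$, are slightly more detailed than the paper's write-up but do not change the argument.
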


\begin{proof}
First, we  show that $\Omega(z)$ is continuous on $\Delta_R$.
Since $\sum_{j=1}^q(1-\eta_j)>2n-k+1$,
it follows from (\ref{equ-19}) that $\Omega(z)$ is continuous on $\Delta_R\setminus\{\prod_{j=1}^qF(H_j)(z)=0\}$.
Let
\begin{equation}\label{A}
A=\frac{e^{\sum_{j=1}^q\mu_j\varpi(j)}\cdot|\tilde{F}_k|}{|F(H_1)|^{\varpi(1)}\cdots|F(H_q)|^{\varpi(q)}}
\end{equation}
By Lemma \ref{lem-13} and (H2),
we have $\nu_A\geq 0$,
which implies that
$$\lim_{z\rightarrow z_0}\Omega(z)=0,$$
for any zero  $z_0$  of $\prod_{j=1}^qF(H_j)(z)$.
Hence, $\Omega(z)$ is continuous on $\Delta_R$.
By Lemma \ref{lem-11},
letting
$\chi=\frac{i}{2\pi}\Omega(z)dz\wedge d\bar{z}$,
 we see that $\chi$ is a continuous pseudo-metric on $\Delta_R$ with  curvature  bounded above by a negative constant.
Applying  Lemma \ref{Schwarz},   we conclude that there exists a positive constant $C$ such
that
$$\Omega(z)\leq C\cdot\left(\frac{2R}{R^2-|z|^2}\right)^2.$$
\end{proof}

\section{The proof of Theorem \ref{mainthm-1}}
\begin{definition}\cite{Fujimoto-1993}
A  continuous curve $r(t)(0\leq t\leq 1)$ in $M$ is said to be {\it divergent} in $M$
if for each compact set $K$, there is some $t_0$ such that $r(t)\not\in K$ for any $t\geq t_0$.
We define the distance $d(p)(\leq +\infty)$  from a point $p\in M$ to the boundary  of $M$ as
the greatest lower bound of the lengths of all continuous curves which is divergent in $M$.
\end{definition}

\begin{definition}\cite{OR-1961}
A surface with a Riemannian metric is {\it complete}
if every continuous curve which is divergent in $M$ has infinite length.
\end{definition}

Now, we are ready to prove Theorem \ref{mainthm-1}.
\begin{proof}[The proof of Theorem \ref{mainthm-1}]
Let $X=(x_1,\cdots,x_n): M\rightarrow \mathbb{R}^n$ be a complete
harmonic surface with the induced metric
and let $G:M\rightarrow{\mathbb P}^{n-1}({\mathbb C})$ be the Gauss map.
Since the surface $X(M)$ is nonflat,
it follows from \cite[Lemma 3.1]{Chen-2021} that $G$ is nonconstant map.
Hence, we may assume that $G$ is $k-$nondegenrate for some $1\leq k\leq n-1$.
By taking the universal covering surface $M$  if necessary, we can assume that $M$ is simply connected.
According to the uniformization theorem,  $M$ is conformally equivalent to either $\mathbb{C}$ or unit disc $\Delta$.
If $M$ is conformally equivalent to $\mathbb{C}$,
then by the classical defect relation in Theorem  \ref{classical defect},
we have
$\sum_{j=1}^q\delta_G(H_j)\leq k+1$.
Since
$0\leq\delta_G^H(H_j)\leq\delta_G(H_j)\leq 1$ that
Theorem \ref{mainthm-1} holds.
Therefore,
it suffices to consider the case of the unit disc $\Delta$.\par

\medskip
Without loss of generality,
we can regard
 $G$  as a linearly non-degenerate map from
$\Delta$ into ${\mathbb P}^{k}({\mathbb C})$
with a reduced representation $\tilde{G}=(\phi_0,\phi_1,\cdots,\phi_k)$.
For given hyperplanes $H_1,\cdots, H_q\in{\mathbb P}^{n-1}({\mathbb C})$ in general position,
 we can set $\tilde  H_j:= H_j\cap {\mathbb P}^{k}({\mathbb C})$, $1\leq j\leq q$,
such that $\tilde  H_j$ are in $(n-1)$-subgeneral position in ${\mathbb P}^{k}({\mathbb C})$.
Let $\varpi(j)$ be the Nochka weights associated with the hyperplanes $\tilde{H_j}$, $1\leq j\leq q$.
Furthermore,
we may assume that $\tilde{H}_j$ are given by
$$\tilde{H}_j: a_{j0}z_0+a_{j1}z_1+\cdots+a_{jk}z_k=0\quad(1\leq j\leq q).$$
From (\ref{derived-equ-10}),
$G_s(\tilde{H}_j)\not\equiv0$.
Hence, for each $s,j$ there exists $i_1,\cdots,i_s$
such that
$$\xi_{js}=\sum_{t\neq i_1, \cdots, i_s}a_{jt}W(f_t,f_{i_1},\cdots,f_{i_s})$$
does not vanish identically.\par

\medskip
We get a contradiction by  assuming that  $\sum_{j=1}^q\delta_G^H(H_j)>(2n-k-1)(k/2+1)$.
By Definition \ref{def-1},
there exist nonnegative constants $\eta_j(1\leq j\leq q)$ such that
$\sum_{j=1}^q(1-\eta_j)>(2n-k-1)(k/2+1)$ and continuous functions $\mu_j$ on $M$.
Each $\mu_j$ is harmonic on $M\setminus \{z:G(\tilde{H}_j)(z)=0\}$
and satisfies conditions $(H1)$ and $(H2)$.
{\small{
\begin{align*}
\sum_{j=1}^q\varpi(j)(1&-\eta_j)-(k+1)(k/2+1)\\
&=\sum_{j=1}^q\varpi(j)-(k+1)-\sum_{j=1}^q\varpi(j)\eta_j-\frac{k(k+1)}{2}\\
&\geq \left(\sum_{j=1}^q(1-\eta_j)-2(n-1)+k-1\right)\frac{1}{\theta}-\frac{k(k+1)}{2}\\
&\geq \left(\sum_{j=1}^q(1-\eta_j)-2(n-1)+k-1\right)\frac{k+1}{2(n-1)-k+1}-\frac{k(k+1)}{2}\\
&=\frac{k+1}{2n-k-1}(\sum_{j=1}^q(1-\eta_j)-(2n-k-1)(k/2+1))>0.
\end{align*}
}}
Choose some $N$ such that
{\small{
$$\frac{\sum_{j=1}^q\varpi(j)(1-\eta_j)-(k/2+1)(k+1)}{\frac{2}{q}+k^2+2k+1+\sum_{s=0}^ks^2}
<\frac{2q}{N}<
\frac{\sum_{j=1}^q\varpi(j)(1-\eta_j)-(k/2+1)(k+1)}{k^2+2k+1+\sum_{s=0}^ks^2}.$$
}}
Let
\begin{eqnarray*}
\Lambda:=\sum_{j=1}^q\varpi(j)(1-\eta_j)-(k+1)-\frac{2q}{N}(k^2+2k+1)
\end{eqnarray*}
and
\begin{eqnarray*}
\tau: &=&\frac{1}{\Lambda}\left(\frac{1}{2}k(k+1)+
\frac{2q}{N}\sum_{s=0}^{k} s^2\right),
\end{eqnarray*}
which imply that
\begin{eqnarray}\label{equ-4}
0<\tau<1,~ 0<N\Lambda(1-\tau)<4.
\end{eqnarray}
We  define a metric on the set $M^\ast:=\Delta\setminus\{p\in\Delta|\tilde{G}_k\cdot\prod_{j=1}^q\prod_{s=0}^{k-1}|\xi_{js}|=0\}$
as follows:
{\small{
\begin{equation}\label{equ-5}
d\sigma^2=\Omega^2|dz|^2=\left(\frac{\prod_{j=1}^{q}\|G(\tilde{H_j})\|
^{\varpi(j)}}{e^{\sum_{j=1}^q\mu_j\varpi(j)}|\tilde{G}_k|^{1+\frac{2q}{N}}
\prod_{j=1}^{q}(\prod_{s=0}^{k-1}|\xi_{js}|)^{\frac{4}{N}}}\right)^{\frac{2}{(1-\tau)\Lambda}}|dz|^2.
\end{equation}}}
Let $\pi: \hat{M}^\ast\rightarrow M^\ast$ be the universal covering surface,
as showed in \cite{Fujimoto-1989},
by using $\Omega$ and $\pi$
one can construct a single-valued  holomorphic function $w=T(\hat{ z})$ on $\hat{M}^\ast$
such that for $p\in M^\ast$, $T$ maps an open neighborhood $U_{\hat p}$ of $\hat p$
biholomorphically onto an open disc $\Delta_R=\{w\in \mathbb{C}: |w|<R\}$ and $T(\hat p)=0$,
 where $\hat p$ satisfies $\pi(\hat p)=p$.
The least upper bound of  $R>0$
for which $T$ biholomorphically maps some open neighborhood of $\hat p$ onto $\Delta_R$ is denoted as $R_0$.
Thus there exist sequence $\{R_n\}$ converging to $R_0$ and
open neighborhood $\{\hat{U}_n\}$ of $\hat p$ such that $T|_{\hat{U}_n}: \hat{U}_n\rightarrow\Delta_{R_n}$ is biholomorphic.
We define $\hat{U}_0:=\cup_n\hat{U}_n$, and $T$ maps $\hat{U}_0$ onto $\Delta_{R_0}$.
And we also define $\Psi:=\pi\cdot(T|\hat{U}_0)^{-1}$,
which is a local diffeomorphism
of a disk $\Delta_{R_0}=\{w\in \mathbb{C}: |w|<R_0\}$
onto an open neighborhood of $p$ with $\Psi(0)=p$.
 Moreover, $\Psi$ is local isometry, i.e. $\Psi^\ast d\sigma=|dw|$.
By applying  Liouville's theorem, we have $R_0<+\infty$.\par

\medskip
We establish the existence of a point $a_0$ on the boundary of $\Delta_{R_0}$ such that the $\Psi-$image $\Upsilon_{a_0}$
of the line $L_{a_0}=\{w=a_0t: 0<t<1\}$ diverges in $M$.
To prove this, we assume the contrary.
For any $a_0\in\partial\Delta_{R_0}$, there exists a sequence $\{t_n\}$
such that $\lim_{n\rightarrow\infty}t_n=1$
and $\Psi(a_0t_n)\rightarrow z_0\in M$.
We consider two cases:
\begin{enumerate}
\item[$\bullet$] Either the point $z_0$ satisfies $|\tilde{G}_k|\cdot\prod_{j=1}^q\prod_{s=0}^{k-1}|\xi_{js}|=0$.
\item[$\bullet$] Or $z_0$ is an interior of $M^\ast$.
\end{enumerate}
In the first case,
if $\tilde{G}_k(z_0)=0$ or $\xi_{js}(z_0)=0$,
we can use
(\ref{A}) and (\ref{equ-5})
to deduce the existence of  a positive constant $C$ such that
$$\Omega\geq C\cdot\left(\frac{1}{|z-z_0|}\right)^{\frac{4}{N\Lambda(1-\tau)}}$$
in a neighborhood $U_{z_0}$ of $z_0$.
Then, utilizing the setting of $N\Lambda(1-\tau)$ in $(\ref{equ-4})$, we obtain
\begin{eqnarray*}
R_0=\int_{L_{a_0}}|dw|&=&\int_{L_{a_0}}\Psi^\ast d\sigma=\int_{\Upsilon_{a_0}}d\sigma\\
&\geq& \int_{\Upsilon_{a_0}\bigcap U_{z_0}}\frac{C}{|z-z_0|^{\frac{4}{N(1-\tau)\Lambda}}}|dz|=\infty,
\end{eqnarray*}
which contradicts the assumption that $R_0<+\infty$.\par

\medskip
For the second case, where $z_0$ is a interior point  of $M^\ast$,
we further claim that
there exist some $t_0(<1)$ such that $\Psi(a_0t)\in U_{z_0}$ for $t_0<t<1$
if we take a simply connected neighborhood $U_{z_0}$ of $z_0$.
In fact, if there exists $\{t_n^\ast\}_{n=1}^\infty$ such that
$\lim_{n\rightarrow\infty}t_n^\ast=1$ and $\Psi(a_0t_n^\ast)\not\in U_{z_0}$,
then the curve $\Psi(a_0t)(0<t<1)$ goes and returns infinitely often from the boundary of $U_{z_0}$
to a sufficiently small neighborhood of $z_0$.
Since $U_{z_0}$  is relatively compact in $M^\ast$ and
$\Omega$ (the coefficient function of $d\sigma$) is positive continuous,
there exists a positive constant $\epsilon$ such that $\epsilon=\min_{z\in \bar{U}_{z_0}}\Omega(z)$.
By the  local isometry of $\Psi$,
we have
$$R_0=\int_{L_{a_0}}|dw|=\int_{\Upsilon_{a_0}}d\sigma\geq\int_{\Upsilon_{a_0}\cap U_{z_0}}\Omega(z)|dz|=\infty.$$
This  contradicts the assumption that $R_0<\infty$.
Therefore, there must exist some $t_0(<1)$ such that $\Psi(a_0t)\in U_{z_0}$ for $t_0<t<1$.
Note that $U_{z_0}$ can be selected by any small neighborhood  of $z_0$.
Thus, we can conclude that
 $\lim_{t\rightarrow1}\Psi (a_0t)=z_0$.
Let $\hat U_{z_0}$  be a connect component of $\pi^{-1}(U_{z_0})$.
Since $\pi$ is homeomorphism when $\pi$ is restricted to $\hat U_{z_0}$,
there exists a limit $\lim_{t\rightarrow1}(T|\hat U_{z_0})^{-1}(a_0t)=\hat{z}_0$.
Then, $T$ maps an open neighborhood of $\hat{z}_0$ biholomorphically onto a neighborhood of $a_0$,
 ensuring that  $(T|\hat{U}_{z_0})^{-1}$ has a holomorphic extension to a neighborhood of each $a_0$ on the boundary of $\Delta_{R_0}$.
Due to the compactness of the boundary of $\Delta_{R_0}$,
there exists a large $R_\ast(>R_0)$  such that $T$ maps
 an open neighborhood of $\hat{U}_{z_0}$ biholomorphically onto $\Delta_{R_\ast}$.
This contradicts the maximality of $R_0$.
Therefore,  there must exist  a point $a_0$ on the boundary of $\Delta_{R_0}$
such that
the $\Psi-$image $\Upsilon_{a_0}=\Psi(L_{a_0})$ is divergent in $M$.\par

\medskip
To obtain a contradiction for the weak completeness of the surface, we aim to prove the finiteness of the length of
$\Upsilon_{a_0}$
with respect to the associated Klotz metric
$\Gamma=2\|G(z)\|^2|dz|^2$.
Let $l(\Upsilon_{a_0})$ denote the length of  the curve $\Upsilon_{a_0}$ with respect to $\Gamma$.
Then,  we have
\begin{eqnarray}
l(\Upsilon_{a_0})&=&\int_{\Upsilon_{a_0}}\sqrt{2}\|G(z)\||dz|\nonumber\\
&=&\int_{L_{a_0}}\sqrt{2}\|\Psi^\ast G(z)\|\cdot\left|\frac{dz}{dw}\right||dw|.\label{equ-6}
\end{eqnarray}
On the other hand, from (\ref{equ-5}), we obtain:
{\small{
\begin{eqnarray*}
\Psi^\ast d\sigma&=&\Psi^\ast\Omega(z)\cdot\left|\frac{dz}{dw}\right|\cdot|dw|\\
&=&\Psi^\ast\left(\frac{\prod_{j=1}^{q}\|G(\tilde{H_j})\|^{\varpi(j)}}{e^{\sum_{j=1}^q\mu_j\varpi(j)}|\tilde{G}_k|^{1+\frac{2q}{N}}
\prod_{j=1}^{q}(\prod_{s=0}^{k-1}|\xi_{js}|)^{\frac{4}{N}}}\right)^{\frac{1}{(1-\tau)\Lambda}}\cdot\left|\frac{dz}{dw}\right|\cdot|dw|
\end{eqnarray*}
}}
By referring to \cite[Proposition 2.1.6]{Fujimoto-1993},
we can conclude that for any $0\leq s\leq k$,
the following holds:
\begin{equation*}
W_z(\phi_{i_0},\cdots,\phi_{i_s})=W_w(\phi_{i_0},\cdots,\phi_{i_s})\cdot\left(\frac{dw}{dz}\right)^{s(s+1)/2}.
\end{equation*}
Set
\begin{align*}
f_s(w)&=\phi_s(\Psi(w)), F(w)=(f_0:\cdots:f_k)\\
F(\tilde{H}_j)(w)&=G(\tilde{H}_j)(z(w)), F_k=W_w(f_0,\cdots,f_k)\\
\psi_{js}&=\sum_{t\neq i_1, \cdots, i_s}a_{jt}W(f_t,f_{i_1},\cdots,f_{i_s}).
\end{align*}
Hence,
\begin{eqnarray*}
\Psi^\ast d\sigma=\left(\frac{\prod_{j=1}^{q}\|F(\tilde{H_j})\|^{\varpi(j)}}{e^{\sum_{j=1}^q\mu_j\varpi(j)}|F_k|^{1+\frac{2q}{N}}
\prod_{j=1}^{q}(\prod_{s=0}^{k-1}|\psi_{js}|)^{\frac{4}{N}}}\right)
^{\frac{1}{(1-\tau)\Lambda}}\cdot\left|\frac{dz}{dw}\right|^{\frac{1}{1-\tau}}\cdot|dw|.
\end{eqnarray*}
Since $\Psi$ is local isometry, i.e. $\Psi^\ast d\sigma=|dw|$,
we obtain
\begin{equation}\label{equ-8}
\left|\frac{dw}{dz}\right|=\left(\frac{\prod_{j=1}^{q}\|F(\tilde{H_j})\|^{\varpi(j)}}{e^{\sum_{j=1}^q\mu_j\varpi(j)}|F_k|^{1+\frac{2q}{N}}
\prod_{j=1}^{q}(\prod_{s=0}^{k-1}|\psi_{js}|)^{\frac{4}{N}}}\right)
^{\frac{1}{\Lambda}}
\end{equation}
From (\ref{equ-6}), we can derive the following inequality:
{\small{
\begin{align*}
l(\Upsilon_{a_0})
&=\int_{L_{a_0}}\sqrt{2}\|F(w)\|\cdot\left(\frac{e^{\sum_{j=1}^q\mu_j\varpi(j)}|F_k|^{1+\frac{2q}{N}}
\prod_{j=1}^{q}(\prod_{s=0}^{k-1}|\psi_{js}|)^{\frac{4}{N}}}{\prod_{j=1}^{q}\|F(\tilde{H_j})\|^{\varpi(j)}}\right)
^{\frac{1}{\Lambda}}|dw|\\
&\leq\int_{L_{a_0}}\sqrt{2}\left(\frac{\|F(w)\|^\Lambda\cdot e^{\sum_{j=1}^q\mu_j\varpi(j)}|F_k|^{1+\frac{2q}{N}}
\prod_{j=1}^{q}(\prod_{s=0}^{k-1}\|F_{s}(\tilde{H}_j)\|)^{\frac{4}{N}}}{\prod_{j=1}^{q}\|F(\tilde{H_j})\|^{\varpi(j)}}\right)
^{\frac{1}{\Lambda}}|dw|.
\end{align*}}}
Since $\displaystyle \lim_{y\rightarrow0^+}y^{2/N}\log(N-\log y)=0$,
then there exists a positive $C$ such that
$$\frac{\|F_s(\tilde{H}_j)\|^{4/N}}{\|{F}_s\|^{4/N}}=\varphi_s(\mathbf{a}_j)^{2/N}\leq \frac{C}{\log(N-\log \varphi_s(\mathbf{a}_j))}.$$
By applying Lemma \ref{lem-11} and Lemma \ref{lem-12},
we conclude that
{\small{
\begin{align*}
l(\Upsilon_{a_0})
&\leq\int_{L_{a_0}}C_0\left(\frac{\|F(w)\|^\Lambda\cdot e^{\sum_{j=1}^q\mu_j\varpi(j)}|F_k|^{1+\frac{2q}{N}}
\prod_{s=0}^{k-1}\|F_{s}\|^{\frac{4q}{N}}}{\prod_{j=1}^{q}\|F(\tilde{H_j})\|^{\varpi(j)}
\cdot\prod_{j=1}^{q}\prod_{s=0}^{k-1}\log(N-\log \varphi_s(\mathbf{a}_j))}\right)
^{\frac{1}{\Lambda}}|dw|\\
&\leq C_0\int_{L_{a_0}}(\Omega^{1/2})^{\frac{\sum_{s=0}^ks(1+\frac{2q}{N}s)}{\Lambda}}|dw|\\
&\leq C_1\int_{L_{a_0}}\left(\frac{2R_0}{R_0^2-|w|^2}\right)^{\tau}|dw|\\
&\leq C_2\int_{0}^{R_0}\left(\frac{1}{R_0-t}\right)^{\tau}dt,
\end{align*}}}
where $C_i$ are some positive constants.
Note that $0<\tau<1$(as seen in (\ref{equ-4})),
we can further deduce that $l(\Upsilon_{a_0})<+\infty$.
Therefore, we obtain a contradiction that contradicts the weak completeness of the surface $X:M\rightarrow\mathbb{R}^3$.
This completes the  proof of Theorem \ref{mainthm-1}.
\end{proof}

%\begin{ack}

%\end{ack}

\end{document}